\newtheorem{theorem}{Theorem}[section]
\newtheorem{corollary}[theorem]{Corollary}
\newtheorem{lemma}[theorem]{Lemma}
\theoremstyle{definition}
\newtheorem{remark}[theorem]{Remark}
\newtheorem{fact}[theorem]{Fact}
\theoremstyle{definition}
\theoremstyle{remark}
\newcommand\restr[2]{{
  \left.\kern-\nulldelimiterspace 
  #1 
  \vphantom{\big|} 
  \right|_{#2} 
  }}
\newcommand{\vertiii}[1]{{\left\vert\kern-0.25ex\left\vert\kern-0.25ex\left\vert #1 
    \right\vert\kern-0.25ex\right\vert\kern-0.25ex\right\vert}}
\newcommand\norm[1]{\|#1\|}
\newcommand\vn[1]{|#1|^{\frac{1}{n}}}
\def\vr{\operatorname{vr}}
\def\absconv{\operatorname{absconv}}
\def\x{\mathbb{x}}
\def\+{\mathbb{+}}
\def\R{\mathbb{R}}
\def\E{\mathbb{E}}
\def\N{\mathbb{N}}
\def\P{\mathbb{P}}
\def\PP{\mathcal{P}}
\def\NN{\mathcal{N}}
\def\PP{\mathcal{P}}
\def\SS{\mathcal{S}}
\def\LL{\mathcal{L}}
\def\HH{\mathcal{H}}
\newcommand{\Pro}{\mathbb P}
\newcommand{\eps}{\varepsilon}
\def\r{\right}
\title[ On the volume ratio of projections of convex bodies]{On the volume ratio of projections of convex bodies}
\author[D. Galicer]{Daniel Galicer}
\address{ Departamento de Matem\'{a}tica - IMAS-CONICET,
Facultad de Cs. Exactas y Naturales  Pab. I, Universidad de Buenos Aires
(1428) Buenos Aires, Argentina}
\email{dgalicer@dm.uba.ar}
\author[A. Litvak]{Alexander E. Litvak}
\address{ Dept.~of Math.~and Stat.~Sciences,
University of Alberta, Edmonton, AB, Canada, T6G 2G1.}
\email{alitvak@ualberta.ca}
\author[M. Merzbacher]{Mariano Merzbacher}
\address{ Departamento de Matem\'{a}tica - IMAS-CONICET,
Facultad de Cs. Exactas y Naturales  Pab. I, Universidad de Buenos Aires
(1428) Buenos Aires, Argentina}
\email{mmerzbacher@dm.uba.ar}
\author[D. Pinasco]{Dami\'an Pinasco}
\address{Departamento de Matem\'{a}ticas y Estad\'{\i}stica, Universidad T. Di Tella, Av. Figueroa Alcorta 7350 (1428), Buenos Aires, Argentina and CONICET}
\email{dpinasco@utdt.edu}
\begin{document}

\keywords{Volume Ratio, Random Polytopes, Convex Bodies}

\subjclass[2020]{52A23, 52A38, 52A40 (primary); 52A21, 52A20 (secondary)}

\begin{abstract}
We study the volume ratio between \emph{projections} of two convex bodies.
Given a high-dimensional convex body $K$ we show that there is another convex body
$L$ such that the volume ratio between any two projections of fixed rank of the bodies
$K$ and $L$ is large. Namely, we prove that for every $1\leq k\leq n$ and for
each convex body $K\subset \R^n$
there is a centrally symmetric body $L \subset \R^n$   such that for any two
 projections $P, Q: \R^n \to \R^n$ of rank $k$ one has
$$
  \vr(PK, QL) \geq  c \, \min\left\{\frac{  k}{  \sqrt{n}} \, \sqrt{\frac{1}{\log \log \log(\frac{n\log(n)}{k})}}, \,
  \frac{\sqrt{k}}{\sqrt{\log(\frac{n\log(n)}{k})}}\r\},
$$
 where $c>0$ is an absolute constant.
This general lower bound is sharp (up to logarithmic factors)
in the regime $k\geq n^{2/3}$.

\end{abstract}

\maketitle
\section{Introduction.}
The problem of estimating Banach-Mazur distances  between projections or sections of convex bodies had aroused considerable interest
(see  for example \cite{BM-quot, mankiewicz2001geometry, LMT, rudelson2004extremal} and references therein).
Recall that this distance, for two centrally symmetric convex bodies $K$ and $L$ in $\mathbb{R}^n$,  is defined as
 \begin{align}
 d_{BM}(K,L) = \inf\{a \cdot b  \, \, \, \, | \, \,\,  \,  \frac{1}{a}K \subset TL \subset bK\},
 \end{align}
 where the infimum is taken over all invertible linear operators $T:\R^n \to \R^n$ and all
 $a, b>0$.

Note that  two convex bodies can be far apart but they may have their projections or sections quite close.
This happens, for example, if the bodies are  Gluskin's polytopes (absolute convex hulls of, say,
$3n$ random points on the standard Euclidean sphere in $\R^n$). Indeed, Gluskin \cite{Gl-81} proved that with high probability
the Banach--Mazur distance between two such polytopes is at least $c n$, where $c$ is an absolute positive constant. On the other hand
it is known that ``most'' sections of a Gluskin polytope are nearly Euclidean, thus ``most'' sections of two Gluskin's polytopes
are quite close to each other. This follows from results on sections of convex bodies having bounded volume ratio
\cite{szarek1980nearly, Sz-vr},
see below for the precise definitions. A more general question was studied by Mankiewicz and Tomczak-Jaegermann in \cite{mankiewicz2001geometry}.

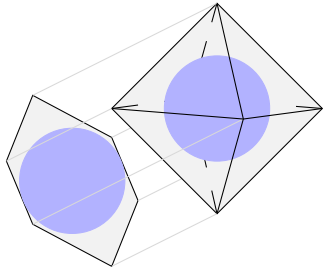
\begin{figure}[ht]
\begin{center}
\begin{tikzpicture}[scale=0.7]
\filldraw[fill=black!5!white](-2,-3)--(-3.5,-2.2)--(-4,-1)--(-3.5,0.25)--(-2,
-0.55)--(-1.5,-1.75)--(-2,-3);
\draw[black!15!white](2,0)--(-1.5,-1.75);
\draw[black!15!white](-2,-0.55)--(-0.5,0.2);
\filldraw[fill=black!5!white](2,0)--(0,-2)--(-2,0)--(0,2)--(2,0);
\draw (-2,0)--(-1.5,0.066);
\draw(-1,0.133)--(-0.5,0.2);
\draw(-0.5,0.2)--(0,0.16);
\draw(0.5,0.12)--(1,0.08);
\draw(1.5,0.04)--(2,0);
\draw(-0.5,0.2)--(-0.4,0.56);
\draw(-0.3,0.92)--(-0.2,1.28);
\draw(-0.1,1.64)--(0,2);
\draw(-0.5,0.2)--(-0.4,-0.24);
\draw(-0.3,-0.68)--(-0.2,-1.12);
\draw(-0.1,-1.56)--(0,-2);
\filldraw[blue!30!white] (0,0) circle (1 cm);
\filldraw[blue!30!white] (-2.75,-1.375) circle (1 cm);
\draw[black!15!white](0,2)--(-3.5,0.25);
\draw[black!15!white](0,-2)--(-2,-3);
\draw[black!15!white](-2,0)--(-4,-1);
\draw[black!15!white](0.5,-0.2)--(-3.5,-2.2);
\draw (-2,0)--(0.5,-0.2)--(2,0);
\draw (0,2)--(0.5,-0.2)--(0,-2);
\end{tikzpicture}
\caption{A projection of the octahedron and the Euclidean ball. }
\end{center}
\end{figure}

They estimated average distance between random $k$-dimensional projections of two given centrally symmetric convex bodies.
It turns out that such an average is bounded below by the product of averages of distances of $((\frac{1}{2}-\eps) k)$-dimensional
projections of these bodies to the Euclidean ball.
Note here that a Gluskin's polytope can be viewed as a random projection of, say, $(3n)$-dimensional octahedron to $\R^n$.

Rudelson \cite{rudelson2004extremal} studied the problem of estimating extremal distances between projections
of centrally symmetric convex bodies. For $k < n$ define the distance $\delta_k(K, L)$ as the minimal Banach--Mazur
distance between $k$-dimensional projections of $K$ and $L$. Rudelson was interested in estimating the diameter of
the Banach--Mazur compactum for this distance; that is, in finding the asymptotic behaviour of
\begin{align*}
\Delta(k,n):= \sup \delta_k(K,L),
\end{align*}
where the supremum is taken over all $n$-dimensional convex symmetric bodies $K$ and $L$.
   He proved that
\begin{align}\label{Rudbou}
\Delta(k,n)\sim_{\log n} \begin{cases}
\sqrt{k} & \text{if } k\leq n^{\nicefrac{2}{3}}\\
\frac{k^2}{n} &\text{if } k > n^{\nicefrac{2}{3}},
\end{cases}
\end{align}
where $A \sim_{\log n} B$ means that
$$
  \frac{1}{C\log^a n} A \leq B \leq (C \log ^a n) A
$$
for some absolute constants $C,a > 0$. In particular, Rudelson showed that
 there are two centrally symmetric convex bodies $K,L \subset \R^n$, such that for any $k < n$,
$$
 \delta_k(K, L) \gtrsim \frac{k^2}{n\log \log n},
$$
where $A \gtrsim B$ means that $A\geq c B$ for some absolute constant $c > 0$.
Note also a well-known fact (proved in  \cite{BLM, CP, gluskin1989extremal, gluskin1989deviation})
$$
  \delta_k( B_2^n, B_1^n) \gtrsim  \sqrt{\frac{k}{\log(1 + \frac{n}{k})}}.
$$


Another possible measure of how far two convex bodies $K, L\subset \R^n$ are from each other, is given by their \emph{volume ratio}:
\begin{align*}
 \vr(K,L):= \inf \left\{ \left(\frac{|K|}{|T(L)|}\right)^{\frac{1}{n}}\,\, |\,\,
  T : \R^n \to \R^n \,\, \mbox{ is an affine transformation, } \,\,
  T(L)\subset  K  \right\},
 \end{align*}
  where $|\cdot|$ denotes $n$-dimensional volume.
Note that the standard volume ratio $\vr(K)$ introduced in \cite{szarek1980nearly}
is just $\vr(K,B_2^n)$.

In other words, $\vr(K,L)$ measures how well $K$ can be approximated from inside by an affine image of $L$ in terms of volume.
This invariant  goes back to the works of McBeath \cite{macbeath1951compactness} and Levi~\cite{flevi}. It was further
investigated by many authors. In particular, Giannopoulos and Hartzoulaki \cite{giannopoulos2002volume} proved that
for every two convex bodies $K, L \subset \R^n$,
\begin{align}\label{GH-vr-orig}
 \vr(K,L)\leq C \sqrt{n} \, \log n,
\end{align}
where $C>0$ is an absolute constant. On the other hand,
it was proved in \cite{Gamepi2021} that given a convex body $K \subset \mathbb{R}^n$ there is centrally symmetric body $L \subset \mathbb{R}^n$ such that the volume ratio $\vr(K,L)$ is large. Precisely, we have
\begin{align} \label{lowerbound}
 \vr(K,L)\geq C \sqrt{n},
\end{align}
where $C>0$ is an absolute constant.
 This general lower estimate is sharp: by John's theorem and a reduction to the symmetric case we have, for example, that given any convex body $L\subset \mathbb{R}^n$, $\vr(B_{2}^n,L) \leq \sqrt{n}$.  The lower bound in \eqref{lowerbound} is a refinement of a previous estimate  obtained by Khrabrov in \cite{khrabrov2001generalized} of order $\sqrt{\frac{n}{\log \log (n)}}$.

 We would also like to note that
$$
  d_m(K,L) = \vr(K,L)\vr(L, K)
$$
is a weaker version of the Banach--Mazur distance, called {\it modified Banach--Mazur distance}
(this name comes from \cite{Khrabrov2000}).
Clearly, $d_m(K,L)\leq  d(K,L)$.
It was introduced in \cite{macbeath1951compactness}
(in fact, logarithm of it, see also \cite{flevi}) and then implicitly
used in \cite{GKM, Gl-81} in order to estimate the Banach--Mazur distance from below. Then it was
investigated in series of works by Khrabrov, see also Corollary~5.3 and Remark~5.4 in \cite{gordon2004john}.
Moreover, Khrabrov \cite{khrabrov2001generalized} proved that for every
centrally symmetric convex body $K \subset \R^n$ and every $1\leq p\leq \infty$,
\begin{align}\label{GH-vr}
  d_m(K, B_p^n) = \vr(K, B_p^n)\vr(B_p^n, K)\leq \sqrt{e n}.
\end{align}



We extend the notion of volume ratio for two given bodies lying in different subspaces of $\R^n$
in the following natural way. Let $1\leq k\leq n$ and let $E, F$ be two $k$-dimensional subspaces of $\R^n$.
Then for two convex bodies  $K\subset E$ and $L\subset F$, we define
\begin{align*}
 \vr(K,L):= \inf \left\{ \left(\frac{|K|}{|T(L)|}\right)^{\frac{1}{k}}\,\, |\,\, T : E \to F \,\, \mbox{ is an affine transformation, } \,\, T(L) \subset K  \right\},
 \end{align*}
 where $|\cdot|$ denotes $k$-dimensional volume.

 Note that for a convex body $K$ we have a collection of $k$-dimensional convex bodies given by  $QK \subset \R^k$ for any given projection  $Q: \R^n \to \R^n$ of rank $k$.
Here we provide a lower bound for volume ratio in the spirit of Rudelson's approach.
 Namely, we show that for every high-dimensional convex body $K \subset \R^n$
there exists a centrally symmetric convex body $L\subset \R^n$ such that, for every pair of $k$-dimensional projections $P, Q: \R^n \to \R^n$, the volume ratio $\vr(PK, QL)$ is large.
The following theorem is the main result of this work.

\begin{theorem}\label{main thm}
Let $n$ be large enough and $k \leq n$. Then
for every convex body $K\subset \R^n$ there is a centrally symmetric body $L \subset \R^n$   such that
for any two $k$-dimensional projections $P, Q: \R^n \to \R^n$  one has
$$
 \vr(P K, Q L) \geq c
    \, \min\left\{\frac{  k}{  \sqrt{n}} \cdot \sqrt{\frac{1}{\log \log \log(\frac{n\log(n)}{k})}}, \,
  \frac{\sqrt{k}}{\sqrt{\log(\frac{n\log(n)}{k})}}\right\}  ,
$$
where $c>0$ is an absolute constant.
\end{theorem}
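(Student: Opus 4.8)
The plan is to take $L$ to be a random polytope — the absolute convex hull of $N$ independent standard Gaussian vectors $g_1,\dots,g_N$ in $\R^n$, equivalently $L=G(B_1^N)$ with $G=[g_1|\cdots|g_N]$ and $B_1^N$ the $N$-dimensional cross-polytope — where $N$ has order $n\log n$, slightly more than $n$ so as to leave room for the union bound below; one then shows that a single such $L$ works simultaneously for all rank-$k$ projections $P,Q$ with high probability. Three deterministic reductions come first. Since $\vr(\cdot,\cdot)$ is invariant under invertible affine maps of its first argument, we may put $PK$ in John position in its $k$-dimensional range, so that $B_2^k\subseteq PK\subseteq kB_2^k$. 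Since a centrally symmetric body fits inside $PK$ after a translation if and only if it is contained in $(PK-b)\cap(b-PK)$ for a suitable $b$ — a symmetric body sitting inside a translate of $PK$ — we may take the map $T$ to be linear and work with symmetric bodies throughout. Finally we may take $P,Q$ to be orthogonal projections.

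The engine of the argument is the observation that $T(QL)$ is a linear image of a random projection of the cross-polytope. Indeed $T(QL)=(TQG)(B_1^N)$, and $S:=TQG$ is an arbitrary $k\times N$ matrix whose rows lie in $V:=\operatorname{rowspan}(G)$, a random $n$-dimensional subspace of $\R^N$; hence $T(QL)=R(W)$, where $W:=\Pi_V B_1^N$ is fixed once $G$ is, and $R:\R^n\to\R^k$ is arbitrary. Equivalently, $\vr(PK,QL)\ge\inf\{\vr(PK,\Pi_{V'}B_1^N):V'\subseteq V,\ \dim V'=k\}$, so the whole problem reduces to estimating how efficiently, by volume, a $k$-dimensional projection of $B_1^N$ seen through the random subspace $V$ can be squeezed inside the shadow $PK$. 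The facts we use about $W$ are the Kashin--Garnaev--Gluskin estimates for a random $V$ — with high probability $c\sqrt{\log(eN/n)/N}\,B_2^n\subseteq W\subseteq C\sqrt{n/N}\,B_2^n$ and $|W|^{1/n}\asymp\sqrt{\log(eN/n)/(nN)}$, together with their uniform-in-$V'$ counterparts for the projections $\Pi_{V'}B_1^N$ — and the volumetric estimate that a symmetric polytope with at most $2N$ vertices contained in $B_2^k$ has volume at most $\big(C\sqrt{\log(eN/k)}/k\big)^k$.

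Two competing lower bounds then emerge, and the claimed estimate is the smaller of the two. If $PK$ (in John position) is within a bounded factor of a Euclidean ball, one uses directly that $T(QL)$ is a symmetric polytope with at most $2N$ vertices contained in a bounded multiple of $B_2^k$, so $|T(QL)|^{1/k}\lesssim\sqrt{\log(N/k)}/k$; against $|PK|^{1/k}\gtrsim k^{-1/2}$ this gives the term $\sqrt{k}/\sqrt{\log(n\log n/k)}$. The other, genuinely probabilistic, case — where $PK$ is far from Euclidean and the preceding estimate collapses — requires ruling out that a $k$-dimensional projection of $B_1^N$ through $V$ could be affinely reshaped to fill a positive proportion of such a $PK$. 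This is carried out by a Rudelson-type $\varepsilon$-net argument, covering the Grassmannian of $k$-dimensional subspaces (metric entropy of order $k(n-k)$, up to logarithmic factors) as well as the admissible maps $R$; the outcome is that every admissible $R(W)$ has volume at most $(\sqrt n/k)\,|PK|^{1/k}$ up to logarithmic losses, which produces the term $(k/\sqrt n)\,\big(\log\log\log(n\log n/k)\big)^{-1/2}$.

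The main obstacle — and the source both of the logarithmic non-sharpness and of the triple logarithm — is the uniformity over all affine $T$ and all pairs of projections: a single concentration inequality for $|R(W)|$ is far too weak to survive the $\exp\big(k(n-k)\,\mathrm{polylog}\big)$-size union bound. The remedy is to bootstrap: prove a crude uniform estimate first, use it to restrict the effective family of maps $R$ (and of subspaces $V'$) that can be near-extremal, re-run the net/concentration argument on this much smaller family, and iterate; each pass replaces a $\log$ by a $\log\log$ and then a $\log\log\log$ in the entropy bookkeeping, and three passes suffice, so the recursion closes. Assembling the two regimes, and using that $\log(N/k)\asymp\log(n\log n/k)$ and $\log(eN/n)\asymp\log\log n$ when $N\asymp n\log n$, yields the theorem.
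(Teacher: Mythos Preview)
Your choice of $L$ (a Gaussian polytope with $N\asymp n\log n$ vertices), the reduction to orthogonal projections and symmetric $K$, and the need for a net argument over projections and operators are all correct and match the paper. But the core of your argument has genuine gaps.

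The case split on whether $PK$ is near-Euclidean is misguided. In the paper's proof both terms of the $\min$ arise from a single uniform upper bound on $|QL|^{1/k}$ (Theorem~\ref{volume estimation}):
\[
|QZ_N|^{1/k}\le C\max\Big\{\tfrac{\sqrt n}{k}\sqrt{\log\log\log(N/k)},\ \tfrac{\sqrt{\log(N/k)}}{\sqrt k}\Big\}\quad\text{for all }Q\in\PP^k(n),
\]
combined with a separate bound (Lemmas~\ref{lemavol projections}--\ref{lemaprobQT}) showing that, uniformly over $P,Q$ and over volume-preserving $T$, one has $\|T:X_{QZ_N}\to X_{PK}\|\cdot|PK|^{1/k}\ge\beta$ for an absolute constant $\beta>0$. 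Combining via $\vr(PK,QL)=\tfrac{|PK|^{1/k}}{|QL|^{1/k}}\inf_T\|T\|$ yields the theorem directly, with no reference to the shape of $PK$. Your ``engine'' observation that $T(QL)=R(\Pi_V B_1^N)$ is a tautology --- it just says $T(QL)$ is a linear image of $L$ --- and the Kashin--Garnaev--Gluskin facts you cite about $\Pi_V B_1^N$ being near-Euclidean are not what drives the argument.

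Your ``genuinely probabilistic case'' is the heart of the matter but is missing its key ingredient: the single-point estimate that makes the net argument work. In the paper this is the Gaussian small-ball bound
\[
\Pr\big\{T_0(Q_0Z_N)\subset A\,Q_1K\big\}\le\big((A/\sqrt{2\pi})^k|Q_1K|\big)^N,
\]
obtained by requiring each projected Gaussian $Q_0g_i$ to fall in $AT_0^{-1}(Q_1K)$ and using $\gamma_k(M)\le(2\pi)^{-k/2}|M|$. Without an estimate of this strength --- decaying as a fixed power raised to the $N$, so as to beat the $\sim(C\sqrt n)^{k^2+2nk}$-size net over operators and projection pairs --- the union bound cannot close. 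You never state such an estimate.

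Finally, your account of the triple logarithm is incorrect: there is no bootstrap or iteration. The $\log\log\log$ arises entirely inside the proof of the uniform volume bound on $|QZ_N|^{1/k}$. For a net point $Q_0$ one controls the decreasing rearrangement of $(\|Q_0 g_i\|_2)_{i\le N}$ by an explicit sequence $(\lambda_m)$ that interpolates between $2\sqrt n$ and $2\varepsilon\sqrt n$, applies a refined polytope-volume inequality (Lemma~\ref{lemma boundGnorm}, sharper than the standard B\'ar\'any--F\"uredi/Gluskin bound) to this sequence, and then optimizes the single parameter $\varepsilon$; the optimum is $\varepsilon\sim\sqrt{\log\log\log(N/k)}/\sqrt{\log(N/k)}$, and this is the sole source of the triple logarithm.
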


Moreover, in Corollary~\ref{sharp} below, we show that Theorem~\ref{main thm} is sharp
(up to logarithmic factors) in the regime $k\geq n^{2/3}$. Remarkably, the phase transition in \eqref{Rudbou} is exactly  $k\sim n^{2/3}$.

Although it is not directly related, we would like to mention the following result from
\cite{GLT}. Interestingly, it also uses Gluskin's polytopes in the proof and leads to
the  essentially same lower bound.
 For all $1\leq k\leq n$ there exist a convex body  $K \subset \R^n$
such that for every centrally symmetric convex body  $L \subset \R^n$ and any
$k$-dimensional projections $P$ and $Q$ one has
$$
  d_{BM}(P K, Q L) \geq   \frac{c\, k}{\sqrt{n  \ln n }},
$$
where $c>0$ is an absolute constant.


Finally, we mention that using duality we can reformulate our theorem in terms of sections.
For a convex body $K$ with the origin in its interior and a subspace $E \subset \R^n$ one has
 $P_E (K^\circ) = (E \cap K)^\circ$, where $P_E$ denotes the orthogonal projection onto $E$.
 Note that for centrally symmetric bodies it is enough to consider only linear operators in
 the definition of volume ratio. Therefore, every result concerning volume ratios of projections
 of a centrally symmetric bodies  $K$ and $L$ has a dual version concerning volume ratios of
 sections of $K^\circ$ and $L^\circ$. Thus, we can also state a dual version of
 our previous result.

\begin{corollary}\label{dual thm}
Let $1\leq k\leq n$.
For each centrally symmetric convex body
$K\subset \R^n$
there is a centrally symmetric body $L \subset \R^n$   such that
 for any two $k$-dimensional subspaces $E, F \subset \R^n$  one has
$$
 \vr(F \cap L, E\cap K) \geq c\,  \min\left\{\frac{  k}{  \sqrt{n}} \cdot \sqrt{\frac{1}{\log \log
 \log(\frac{n\log(n)}{k})}}, \,
  \frac{\sqrt{k}}{\sqrt{\log(\frac{n\log(n)}{k})}}\right\},
$$
  where $c>0$ is an absolute constant.
\end{corollary}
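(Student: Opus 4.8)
The plan is to deduce Corollary~\ref{dual thm} from Theorem~\ref{main thm} by polarity, as indicated in the paragraph preceding the statement. Write $\Phi(n,k)$ for the minimum appearing on the right-hand side of Theorem~\ref{main thm}. The first thing to set up is an elementary duality identity for volume ratio between centrally symmetric convex bodies lying in (possibly different) $k$-dimensional subspaces of $\R^n$: if $A\subset F$ and $B\subset E$ are such bodies, where $E$ and $F$ are $k$-dimensional subspaces each carrying the Euclidean structure inherited from $\R^n$ and all polars are taken inside the respective subspace, then
\[
 \vr(A,B)^{k}=\frac{|A|\,|A^{\circ}|}{|B|\,|B^{\circ}|}\,\vr(B^{\circ},A^{\circ})^{k}.
\]
To see this, one uses that $T\mapsto T^{*}$ (adjoint with respect to the induced inner products) is a bijection between $\{T\colon E\to F \mid T(B)\subseteq A\}$ and $\{S\colon F\to E \mid S(A^{\circ})\subseteq B^{\circ}\}$ that preserves $|\det|$, together with the observation that along this bijection the quantities $|A|/|T(B)|$ and $|B^{\circ}|/|S(A^{\circ})|$ differ by the $T$-independent factor $|A|\,|A^{\circ}|/(|B|\,|B^{\circ}|)$; taking infima then gives the identity. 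Now the Bourgain--Milman inequality bounds $|A|\,|A^{\circ}|$ from below by $c_0^{k}|B_2^k|^2$ for an absolute constant $c_0>0$, while the Santal\'o inequality bounds $|B|\,|B^{\circ}|$ from above by $|B_2^k|^2$; hence the prefactor is at least $c_0^{k}$, so that $\vr(A,B)\geq c_0\,\vr(B^{\circ},A^{\circ})$.

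With this identity at hand, I would apply Theorem~\ref{main thm} to the convex body $K^{\circ}$ --- a genuine convex body, since $K$ centrally symmetric forces $0\in\operatorname{int}K^{\circ}$ --- to obtain a centrally symmetric body $M\subset\R^n$ such that $\vr(PK^{\circ},QM)\geq c\,\Phi(n,k)$ for every pair of rank-$k$ projections $P,Q\colon\R^n\to\R^n$, in particular for the orthogonal projections $P_E,P_F$ onto any two $k$-dimensional subspaces $E,F$. Then I set $L:=M^{\circ}$, which is again a centrally symmetric body, with $L^{\circ}=M$.

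To conclude, fix $k$-dimensional subspaces $E,F\subset\R^n$ and use the standard section-projection duality $(E\cap K)^{\circ}=P_E(K^{\circ})$ and $(F\cap L)^{\circ}=P_F(L^{\circ})=P_F M$, with polars taken inside $E$ and $F$ respectively. Applying the identity above with $A=F\cap L$ and $B=E\cap K$ yields
\[
 \vr(F\cap L,E\cap K)\geq c_0\,\vr\bigl((E\cap K)^{\circ},(F\cap L)^{\circ}\bigr)=c_0\,\vr\bigl(P_EK^{\circ},P_FM\bigr)\geq c_0 c\,\Phi(n,k),
\]
which is the asserted bound once the constant is renamed; the finitely many small values of $n$ not covered by the ``$n$ large enough'' hypothesis of Theorem~\ref{main thm} are trivial, since $\vr\geq 1$ always while $\Phi(n,k)$ is bounded by an absolute constant for bounded $n$. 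The genuinely delicate points are the subspace version of the polarity identity for $\vr$ --- in particular justifying that for centrally symmetric bodies the infimum defining $\vr$ may be taken over linear rather than affine transformations, which is exactly what makes passing to the adjoint legitimate --- and controlling the volume-product prefactor via the Bourgain--Milman inequality. Everything else is bookkeeping.
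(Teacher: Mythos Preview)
Your proposal is correct and follows essentially the same route as the paper: apply Theorem~\ref{main thm} to $K^{\circ}$, then pass to sections via the duality $\vr(A,B)\sim\vr(B^{\circ},A^{\circ})$ and the identity $(E\cap K)^{\circ}=P_E K^{\circ}$. The only difference is cosmetic: the paper invokes this duality as Fact~\ref{propiedades elementales}(2), whereas you unpack its proof explicitly via the adjoint bijection together with Blaschke--Santal\'o and Bourgain--Milman; you are also slightly more careful in noting that the duality must be applied to bodies sitting in possibly different $k$-dimensional subspaces and in handling the ``$n$ large enough'' caveat.
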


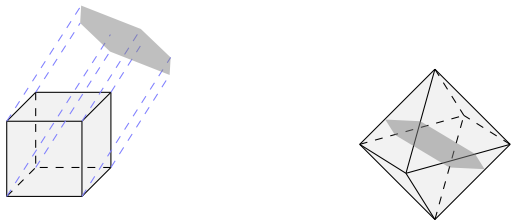
\begin{figure}[h]
\begin{center}
\begin{tikzpicture}\begin{scope}[xshift = 5cm,line join=round]
\coordinate (A) at (1,0,0);
\coordinate (B) at (0,1,0);
\coordinate (C) at (0,0,1);
\coordinate (D) at (-1,0,0);
\coordinate (E) at (0,-1,0);
\coordinate (F) at (0,0,-1);

\fill [white!90!gray] (A) -- (B) -- (C) --(A);
\fill [white!90!gray] (B) -- (D) -- (C) --(B);
\fill [white!90!gray] (A) -- (C) -- (E) --(A);
\fill [white!90!gray] (D) -- (C) -- (E) --(D);
\draw (A) -- (B);
\draw (A) -- (C);
\draw [dashed](A) -- (F);
\draw (A) -- (E);
\draw (D) -- (B);
\draw (D) -- (C);
\draw[dashed] (D) -- (F);
\draw (D) -- (E);
\draw (B) -- (C);
\draw [dashed](B) -- (F);
\draw (E) -- (C);
\draw [dashed](E) -- (F);



\coordinate (U1) at (0.66,-0.33,0);
\coordinate (U2) at (-0.66,0.33,0);
\coordinate (U3) at (0.66,0,0.33);
\coordinate (U4) at (-0.66,0,-0.33);
\coordinate (U5) at (0,0.5,0.5);
\coordinate (U6) at (0,-0.5,-0.5);


\fill[color=gray,opacity = 0.5] (U4) -- (U2) -- (U5) -- (U3) --(U1)-- (U6) -- (U4);

\end{scope}
\begin{scope}[ scale=0.5]
\coordinate (Ac) at (1,1,1);
\coordinate (Bc) at (1,1,-1);
\coordinate (Cc) at (1,-1,1);
\coordinate (Dc) at (1,-1,-1);
\coordinate (Ec) at (-1,1,1);
\coordinate (Fc) at (-1,1,-1);
\coordinate (Gc) at (-1,-1,1);
\coordinate (Hc) at (-1,-1,-1);
\coordinate (A) at (1,-1,0);
\coordinate (B) at (0,-1,0);
\coordinate (C) at (0,-1,1);
\coordinate (D) at (-1,-1,0);
\coordinate (E) at (0,-1,0);
\coordinate (F) at (0,-1,-1);
\fill[white!90!gray] (Ac) -- (Bc) -- (Dc) -- (Cc) -- (Ac); 
\fill[white!90!gray] (Ac) -- (Ec) -- (Fc) -- (Bc) -- (Ac);
\fill[white!90!gray] (Ac) -- (Cc) -- (Gc) -- (Ec) -- (Ac);  
\draw (Ac) -- (Bc);
\draw (Bc) -- (Dc);
\draw (Ac) -- (Cc);
\draw (Cc) -- (Dc);
\draw (Ec) -- (Fc);
\draw (Ec) --(Ac);
\draw (Fc) -- (Bc);
\draw (Gc) -- (Cc);
\draw (Ec) -- (Gc);
\draw [dashed] (Fc) -- (Hc)--(Dc);
\draw [dashed](Gc) -- (Hc);

\end{scope}

\begin{scope}[scale = 0.5]
\def\x{1};
\def\y{2};
\def\z{-2};
\coordinate (pAc) at ({0.88 + \x},{0.77 + \y},{1.22 + \z});
\coordinate (pBc) at ({0.44 + \x},{-0.11 + \y},{0.11 + \z});
\coordinate (pCc) at ({1.33 + \x},{-0.33 + \y},{0.33 + \z});
\coordinate (pDc) at ({0.88 + \x},{-1.22 + \y},{-0.77 + \z});
\coordinate (pEc) at ({-0.88 + \x},{1.22 + \y},{0.77 + \z});
\coordinate (pFc) at ({-1.33 + \x},{0.33 + \y},{-0.33 + + \z});
\coordinate (pGc) at ({-0.44+ \x},{0.11 + \y},{-0.11 + \z});
\coordinate (pHc) at ({-0.88+ + \x},{-0.77 + \y},{-1.22 + \z});

\coordinate (Ac) at (1,1,1);
\coordinate (Bc) at (1,1,-1);
\coordinate (Cc) at (1,-1,1);
\coordinate (Dc) at (1,-1,-1);
\coordinate (Ec) at (-1,1,1);
\coordinate (Fc) at (-1,1,-1);
\coordinate (Gc) at (-1,-1,1);
\coordinate (Hc) at (-1,-1,-1);

\draw[dashed,blue!50!white] (Ac) -- (pAc);
\draw[dashed,blue!50!white] (Bc) -- (pBc);
\draw[dashed,blue!50!white] (Cc) -- (pCc);
\draw[dashed,blue!50!white] (Dc) -- (pDc);
\draw[dashed,blue!50!white] (Ec) -- (pEc);
\draw[dashed,blue!50!white] (Fc) -- (pFc);
\draw[dashed,blue!50!white] (Gc) -- (pGc);
\draw[dashed,blue!50!white] (Hc) -- (pHc);

%

\fill[color=gray,opacity = 0.5]  (pAc) -- (pCc)--(pDc)--(pHc)-- (pFc) -- (pEc);

\end{scope}

\end{tikzpicture}
\caption{A projection of $B_\infty^n$ and a section of $B_1^n=(B_\infty^n)^{\circ}$. }
\end{center}
\end{figure}

\section{Preliminaries.}\label{Spreliminaries}

Given two sequences of real numbers $(a_{n})_{n}$ and $(b_{n})_{n}$ we write $a_{n} \lesssim b_{n}$
(resp., $a_{n} \gtrsim b_{n}$) if there  exists an absolute constant $C>0$ (independent of $n$)
such that $a_{n} \leq C b_{n}$ (resp., $C a_{n} \geq b_{n}$) for every $n$.
We write $a_{n} \sim b_{n}$ if $a_{n} \lesssim b_{n}$ and $b_{n} \lesssim a_{n}$.
We denote by $e_1, \dots, e_n$ the canonical vector basis in $\R^n$ and by $B_2^n$ and $S^{n-1}$,
the unit ball and unit sphere in $\R^n$. Similarly, the unit ball of $\ell_p^n$ is denoted by $B_p^n$,
where the norm in  $\ell_p^n$ is defined by
$$
 \|x\|_p=\left(\sum_{i=1}^n |x_i|^p\right)^{1/p}\quad \mbox{for}\,\, 1\leq p< \infty \quad
 \mbox{and} \quad \|x\|_\infty = \max _{i\leq n} |x_i|.
$$
Given $X_1, \dots, X_m\in \R^n$, we denote by $\absconv \{X_1, \dots, X_m \}$ their absolute convex hull, that is,
$$\absconv\{X_1, \dots, X_m\} : = \left\{ \sum_{i=1}^m a_i X_i\,\, \, \,  |\, \,\,\,  \sum_{i=1}^m \vert a_i \vert \leq 1 \right\} \subset \R^n.$$

A convex body $K \subset \R^n$ is a compact convex set with non-empty interior.
Its Minkowski functional is defined on
$\R^n$ by
 $$
    \|x\|_K  = \inf \left\{\lambda >0 \,\, |\, \, x \in \lambda K \right\}.
 $$
If $K$ is centrally symmetric (i.e., $K = -K$), then $\|\cdot\|_K$ defines a norm and
we denote by $X_K$ the normed space $(\R^n, \Vert \cdot \Vert_{K})$ that has $K$ as its unit ball.
By $|K|$ we denote the $n$-dimensional volume of $K$. Moreover, with slight abuse of notations,
given a $k$-dimensional projection $P$ on $\R^n$, by $|PK|$ we denote the $k$-dimensional volume of $PK$.

The polar set of $K$, denoted by $K^{\circ}$, is defined as
\begin{align*}
K^{\circ}= \{ x \in \R^n \,\,  |\,\, \langle x, y \rangle \leq 1\,\,\, \mbox{ for all } \,\,\, y\in K \}.
\end{align*}

The following result relates the volume of a body with the volume of its polar and is due to Blaschke-Santal\'o and Bourgain-Milman \cite[Theorem 1.5.10 and Theorem 8.2.2]{artstein2015asymptotic}:

{\it There exists an  absolute constant  $c>0$ such that for every centrally symmetric 
convex body  $K\subset \R^n$, }
\begin{align} \label{santalo}
 c |B_2^n|^{2/n} \leq  \vn{K}\vn{K^{\circ}}
\leq |B_2^n|^{2/n}.
\end{align}
In other words,
$ \vn{K}\vn{K^{\circ}}
\sim \frac{1}{n}$.
 We also use the support function of $K$ defined on $\R^n$ by
 $$
    h_K (x) = \sup _{y\in K }  \langle x, y \rangle = \|x\|_{K^{\circ}}.
 $$

Given an operator $T: X\to Y$ the operator norm is denoted by $\|T: X\to Y\|$. Similarly,
given an operator $T: \R^n\to \R^n$, we denote  $\|T\|:= \|T: \ell_2^n \to \ell_2^n\|$.

\bigskip
We now recall some basic properties of the volume ratio, see e.g. \cite{khrabrov2001generalized}.

\begin{fact} \label{propiedades elementales}
For every pair of centrally symmetric convex bodies $(K,L)$ in $\R^n$ the following holds:
\begin{enumerate}
\item \label{prop vr norma}
$$\vr(K,L) = \left(\frac{|K|}{|L|}\right)^{\frac{1}{n}} \cdot \inf_{T \in SL(n,\R)} \Vert T: X_L \to X_K \Vert,$$
where the infimum runs all over the linear transformations $T$ that lie on the special linear group of degree $n$
(matrices of determinant one).

\item $\vr(K,L) \sim  \vr(L^{\circ},K^{\circ})$.
\item If  $T :X_L \to X_K$ is a linear operator we have that
\begin{align*}
\frac{1}{\norm{T: X_L \to X_K}} \cdot T(L) \subset K \quad \mbox{and hence} \quad \vr(K,L) &\leq  \frac{\norm{T: X_L \to X_K}\vn{K}}{\vert \det{T} \vert^{\frac{1}{n}}\vn{L}}.
\end{align*}
\item $\vr(K,L) \leq \vr(K,Z) \cdot \vr(Z,L)$ for every convex body $Z$ in $\R^n$.
\end{enumerate}
\end{fact}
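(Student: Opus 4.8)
The plan is to prove the four assertions in turn: I would first obtain \eqref{prop vr norma} straight from the definition of $\vr(\cdot,\cdot)$, and then deduce the remaining three from it, using the Blaschke--Santal\'o/Bourgain--Milman estimate \eqref{santalo} for the second one.

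For \eqref{prop vr norma} the first move is to discard translations. If $S_0(L)+v\subseteq K$ with $S_0$ linear, then $S_0(L)$ is centrally symmetric (because $L$ is), so $S_0(L)-v=-(S_0(L)+v)\subseteq -K=K$ as well; taking the Minkowski average of these two containments gives $S_0(L)\subseteq K$, and $|S_0(L)|=|S_0(L)+v|$. Hence in $\vr(K,L)$ one may restrict to linear maps. Writing an invertible linear map as $S=\lambda T$ with $\lambda=|\det S|^{1/n}>0$ and $T$ volume-preserving, the containment $\lambda T(L)\subseteq K$ is equivalent to $\lambda\le\|T:X_L\to X_K\|^{-1}$, while $(|K|/|S(L)|)^{1/n}=|K|^{1/n}/(\lambda\,|L|^{1/n})$; so for a fixed $T$ the best admissible $\lambda$ is $\|T:X_L\to X_K\|^{-1}$, and non-invertible maps contribute only an infinite ratio. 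Taking the infimum over $T\in SL(n,\R)$ yields \eqref{prop vr norma}.

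The third property is the one-line computation displayed in the statement: for $x\in L$ one has $\|Tx\|_K\le\|T:X_L\to X_K\|$, hence $\|T:X_L\to X_K\|^{-1}T(L)\subseteq K$; since this set is a linear image of $L$ of volume $\|T:X_L\to X_K\|^{-n}\,|\det T|\,|L|$, the definition of volume ratio gives the stated bound. The fourth property follows by composing admissible maps: if $T_1(Z)\subseteq K$ and $T_2(L)\subseteq Z$ are affine, then $T_1\circ T_2$ is affine with $T_1T_2(L)\subseteq K$, and, the determinant of its linear part being multiplicative, $|K|/|T_1T_2(L)|=\big(|K|/|T_1(Z)|\big)\big(|Z|/|T_2(L)|\big)$; taking $1/n$-th powers and then infima over $T_1$ and $T_2$ separately (legitimate as all factors are nonnegative) gives $\vr(K,L)\le\vr(K,Z)\vr(Z,L)$.

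The only place where a genuine external input enters — and hence the one step I expect to be the real obstacle — is the second property. Applying \eqref{prop vr norma} to both $\vr(K,L)$ and $\vr(L^{\circ},K^{\circ})$, the operator-norm infima coincide, since $\|T:X_L\to X_K\|=\|T^{*}:X_{K^{\circ}}\to X_{L^{\circ}}\|$ (because $X_K^{*}=X_{K^{\circ}}$ and $X_L^{*}=X_{L^{\circ}}$) and $T\mapsto T^{*}$ maps $SL(n,\R)$ onto itself. Therefore $\vr(L^{\circ},K^{\circ})/\vr(K,L)=(|L|\,|L^{\circ}|)^{1/n}/(|K|\,|K^{\circ}|)^{1/n}$, and by \eqref{santalo} the numerator and the denominator both lie between $c\,|B_2^n|^{2/n}$ and $|B_2^n|^{2/n}$, so this ratio is bounded above and below by absolute constants. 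This is precisely why the statement reads $\sim$ and not $=$: one cannot dispense with the (deep) Bourgain--Milman lower bound for the volume product, whereas everything else is elementary linear algebra together with the behaviour of volume under affine maps.
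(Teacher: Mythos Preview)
The paper does not prove this fact; it simply records it with a reference to \cite{khrabrov2001generalized}. Your argument is the standard one and is correct, with one caveat concerning item~(1).

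Your decomposition $S=\lambda T$ with $\lambda=|\det S|^{1/n}$ yields $|\det T|=1$, not $\det T=1$, so what you actually establish is the identity with the infimum taken over all volume\nbd-preserving linear maps rather than over $SL(n,\R)$. In even dimension these two infima can genuinely differ: take a generic centrally symmetric hexagon $L\subset\R^2$ whose only linear automorphisms are $\pm I$, and set $K=R(L)$ for a reflection $R$; then $\vr(K,L)=1$, but any $T\in SL(2,\R)$ with $\|T\|_{L\to K}=1$ would force $T(L)=K$ and hence $R^{-1}T\in\{\pm I\}$, contradicting $\det T=1$. A compactness argument then gives $\inf_{T\in SL(2,\R)}\|T\|_{L\to K}>1$. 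So the identity in (1), read literally with $SL(n,\R)$, is false; the correct version --- which is what you prove --- replaces $SL(n,\R)$ by $\{T:|\det T|=1\}$. This is a wrinkle in the paper's statement rather than in your reasoning, and the paper's own applications of (1) (through the sets $\SS(Q_0,Q_1)$ of volume\nbd-preserving maps) use exactly the version you obtain.

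Your treatments of (2)--(4) are correct as written; in particular, your identification of the Bourgain--Milman lower bound in \eqref{santalo} as the sole non\nbd-elementary input, responsible for the ``$\sim$'' rather than ``$=$'' in (2), is exactly right.
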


\begin{remark}\label{rem-rs}
We finally discuss not necessarily symmetric convex bodies. 
Note that for every convex bodies $K$ and $L$ in $\R^n$ and
for  any affine transformations $T$ and $S$ one has
$$\vr(K,L)=\vr(T(K),S(L)).$$ In other words, the volume ratio between $K$ and $L$ depends exclusively on the affine
classes of the bodies involved.
By Rogers-Shephards inequality (see e.g., \cite[Theorem~1.5.2]{artstein2015asymptotic}), for every convex body $W \subset \R^n$ we have $\vr(W-W,W) \leq 4$. Clearly the last inequality in
Fact~\ref{propiedades elementales} holds for any (not necessarily centrally symmetric) convex bodies $K, L, Z$.
Therefore,
\begin{align}\label{RSvr}
  \vr(K-K, L) \leq \vr(K-K, K)\, \vr(K,L)\leq   4\, \vr(K,L).
\end{align}
\end{remark}

\section{Auxiliary results.}



We start with  recalling a standard result in geometric measure theory
(see e.g.,  \cite[Theorem 7.5]{mattila1999geometry}).

\index{Hausdorff measure}
\begin{theorem}\label{Matilla}
Let $f :\R^m \to \R^n$ is a Lipschitz map with the Lipschitz constant $L_f$,
$0\leq s\leq m$, and $A \subset \R^m$.  Then
$$\HH^s(f(A)) \leq L_f^s\HH^s(A),$$
where $\HH^s$ is the $s$-Hausdorff measure.
\end{theorem}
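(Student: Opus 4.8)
The plan is to argue directly from the definition of Hausdorff measure via economical covers by sets of small diameter, exploiting the single elementary fact that a Lipschitz map cannot increase diameters by more than the factor $L_f$. Recall that for $\delta>0$ and $s\geq 0$ one sets
$$
  \HH^s_\delta(A):=\inf\Bigl\{\sum_i(\diam U_i)^s \ :\ A\subset\bigcup_i U_i,\ \diam U_i\leq\delta\Bigr\}
$$
(up to a fixed normalizing constant depending only on $s$, which plays no role below), and $\HH^s(A)=\sup_{\delta>0}\HH^s_\delta(A)$, the supremum being attained in the limit $\delta\to 0^+$ since $\delta\mapsto\HH^s_\delta(A)$ is nonincreasing.

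If $L_f=0$ then $f$ is constant, so $f(A)$ is empty or a singleton and the inequality is immediate (with the usual conventions $0^0=1$ and $0\cdot\infty=0$); hence I assume $L_f>0$. The key observation is that $\diam f(U)\leq L_f\,\diam U$ for every $U\subset\R^m$, which follows at once from $|f(x)-f(y)|\leq L_f|x-y|$. Now fix $\delta>0$ and $\eps>0$ and choose a cover $\{U_i\}_i$ of $A$ with $\diam U_i\leq\delta$ and $\sum_i(\diam U_i)^s\leq\HH^s_\delta(A)+\eps$. Since $A\subset\bigcup_i U_i$, we have $f(A)=\bigcup_i f(U_i\cap A)$, and $\diam f(U_i\cap A)\leq L_f\diam(U_i\cap A)\leq L_f\delta$, so $\{f(U_i\cap A)\}_i$ is an admissible cover of $f(A)$ at scale $L_f\delta$. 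Therefore
$$
  \HH^s_{L_f\delta}(f(A))\leq\sum_i\bigl(\diam f(U_i\cap A)\bigr)^s\leq L_f^s\sum_i(\diam U_i)^s\leq L_f^s\bigl(\HH^s_\delta(A)+\eps\bigr).
$$

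Letting $\eps\to0^+$ gives $\HH^s_{L_f\delta}(f(A))\leq L_f^s\HH^s_\delta(A)\leq L_f^s\HH^s(A)$ for every $\delta>0$; taking the supremum over $\delta>0$ and noting that $L_f\delta$ then also ranges over all of $(0,\infty)$ (because $L_f>0$) yields $\HH^s(f(A))\leq L_f^s\HH^s(A)$, as claimed. There is no genuine difficulty in this argument; the only points requiring a modicum of care are the passage to the limit in $\delta$ (handled by the monotonicity of the Hausdorff premeasure together with $L_f>0$) and the trivial degenerate case $L_f=0$; and, if one insists on carrying the normalizing constant $\alpha(s)$ in the definition of $\HH^s_\delta$, it simply cancels from both sides of the displayed estimate, so the reasoning is unchanged.
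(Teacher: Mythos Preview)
Your argument is correct and is precisely the standard proof of this fact. Note, however, that the paper does not actually supply a proof of this statement: it is quoted as a known result in geometric measure theory with a reference to Mattila's book, so there is no ``paper's own proof'' to compare against. Your write-up is exactly the argument one finds in that reference.
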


Recall that for $k \in \N$ the $k$-Hausdorff measure is a multiple of the Lebesgue measure in $\R^k$. Namely, for every measurable set $A$, $\HH^{k}(A) = \frac{2^k}{|B_2^k|}|A|$.

We denote by $\PP^k(n)$ \index{$\PP^k(n)$}the set of all orthogonal projections of rank $k$ in $\R^n$. Given $Q \in  \PP^k(n)$,
 $|QK|$ denotes the $k$-dimensional Lebesgue measure of $QK$. As an application of the last theorem we prove the
 following lemma, that relates the $k$-dimensional volume of two different projections of $K$ with their distance
 in the canonical operator metric.

\begin{lemma}\label{lema medidas}
 Let $1\le k\leq n$ and let $P,Q \in \PP^k(n)$ be such that $\norm{P-Q} \leq \frac{1}{2\sqrt{n}}$.
Then for every  centrally symmetric convex body $K \subset \R^n$ in the John position,
$$
 \frac{1}{2} |QK|^{\frac{1}{k}} \leq |PK|^{\frac{1}{k}} \leq 2 |QK|^{\frac{1}{k}}.
$$
\end{lemma}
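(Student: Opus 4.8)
The plan is to compare $PK$ with $QK$ by transporting one projection onto the other via an orthogonal map that is close to the identity, and then apply Theorem~\ref{Matilla} to control the volume distortion. First I would record the reduction: since $K$ is in John position, $B_2^n \subset K \subset \sqrt{n}\, B_2^n$, so $K$ is contained in a Euclidean ball of radius $\sqrt n$. Now restricted to the subspace $Q\R^n$, both $P$ and $Q$ act; the key observation is that $P|_{Q\R^n}: Q\R^n \to P\R^n$ is a linear map whose singular values are all close to $1$ when $\norm{P-Q}$ is small. Indeed, for $x \in Q\R^n$ we have $\norm{Px - x} = \norm{Px - Qx} \le \norm{P-Q}\norm{x} \le \frac{1}{2\sqrt n}\norm{x}$, so $P|_{Q\R^n}$ is injective with $\norm{(P|_{Q\R^n})^{-1}} \le (1 - \frac{1}{2\sqrt n})^{-1}$ and operator norm at most $1$. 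Thus $P$ maps $QK$ onto $PK$ (using $PQ = $ identity on... — more carefully, one checks $P(QK) = PK$ since $QK \subset Q\R^n$ and $Px = PQx$ is false in general, so instead I would argue: $PK = \{Px : x \in K\}$ and compare with $\{P(Qx) : x\in K\} = P(QK)$, controlling $\norm{Px - PQx} = \norm{P(x - Qx)} \le \norm{x - Qx}$).

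The cleaner route, which I would actually carry out, is to build a single Lipschitz map between the two $k$-dimensional sets. Consider the map $g = P \circ (Q|_{P\R^n})^{-1} : P\R^n \to P\R^n$ — or more symmetrically, observe that $Q|_{P\R^n} : P\R^n \to Q\R^n$ has operator norm $\le 1$ and inverse of norm $\le (1 - \tfrac{1}{2\sqrt n})^{-1} \le 2$ by the estimate above (valid once $n$ is large, but in fact $\tfrac{1}{2\sqrt n} \le \tfrac12$ always). Applying Theorem~\ref{Matilla} with $s = k$, $f = Q|_{P\R^n}$ to the set $A = PK \subset P\R^n$ gives $\HH^k(Q(PK)) \le L_f^k \HH^k(PK)$ with $L_f \le 1$; and applying it to $f = (Q|_{P\R^n})^{-1}$ gives the reverse inequality with constant $\le 2^k$. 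Then I must relate $Q(PK)$ to $QK$: here I would use that $\norm{Qx - QPx} = \norm{Q(x - Px)} \le \norm{x-Px} = \norm{Px - Qx}$... wait — the estimate $\norm{(P-Q)x}\le \tfrac{1}{2\sqrt n}\norm x$ holds for all $x$, and $\norm{x} \le \sqrt n$ on $K$, so $Q(PK)$ and $QK$ are within Hausdorff distance $\tfrac12$ of each other, which combined with $B_2^k \cap Q\R^n \subset QK$ (John position is inherited: $B_2^n \subset K$ gives $QB_2^n = B_2^n\cap Q\R^n \subset QK$) gives a bounded multiplicative comparison of their $k$-volumes via the inclusions $\tfrac12 QK \subset$ (something) — this needs the John lower bound to absorb the additive error into a multiplicative one. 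Recalling $\HH^k = \frac{2^k}{|B_2^k|}|\cdot|$ on $\R^k$, all the $\HH^k$ estimates transfer directly to Lebesgue volume, and chaining the inequalities yields $\tfrac12 |QK|^{1/k} \le |PK|^{1/k} \le 2|QK|^{1/k}$ after adjusting the numerical constants inside the argument.

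The main obstacle, and the step I would be most careful about, is the passage from the additive (Hausdorff-distance) comparison of $Q(PK)$ with $QK$ to a multiplicative comparison of their volumes. This is where the John position is essential: because $QK$ contains the unit ball of $Q\R^n$, an additive perturbation of size $O(1)$ (with the correct constant, obtained from $\norm{P-Q}\le \tfrac{1}{2\sqrt n}$ and $\mathrm{diam}(K) \le 2\sqrt n$) sits inside a fixed dilate like $\tfrac32 QK$, and conversely. One should track the constants so that the composition of the "orthogonal transport" distortion ($\le 2$ per Theorem~\ref{Matilla}) and the "perturbation" distortion together stay below the factor $2$; if the naive constants are slightly too large, one tightens the hypothesis usage — e.g. replacing $\tfrac{1}{2\sqrt n}$-type bounds by the sharper $(1-\tfrac{1}{2\sqrt n})$ and noting $K \subset \sqrt n B_2^n$ is not tight either since $\sqrt{n}$ can be improved, or simply absorbing slack into "$n$ large enough" is not needed here as the statement has no such hypothesis, so the constants must genuinely close. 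I expect a short computation, carefully bookkeeping these two sources of error, suffices.
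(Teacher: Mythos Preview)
Your approach shares the same ingredients as the paper's proof --- Theorem~\ref{Matilla} plus the John-position inclusions $B_2^n \subset K \subset \sqrt{n}\,B_2^n$ --- but routes them differently, and as you yourself suspect, the constants do not close. You go via $QPK$: first bound $|PK|^{1/k}$ by $(1-\tfrac{1}{2\sqrt n})^{-1}|QPK|^{1/k}$ using the inverse of $Q|_{P\R^n}$, then show $QPK \subset QK + \tfrac{1}{2}(B_2^n\cap Q\R^n) \subset \tfrac{3}{2}\,QK$ via the Hausdorff-distance argument. Chaining these gives a factor $(1-\tfrac{1}{2\sqrt n})^{-1}\cdot \tfrac{3}{2}$, which exceeds $2$ for small $n$, and there is no evident slack to recover --- the $\tfrac12$ in the Hausdorff step is exactly $\|P-Q\|\cdot\sqrt{n}$. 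Your suggestion that ``$\sqrt{n}$ can be improved'' is not available: the John bound $K\subset\sqrt{n}\,B_2^n$ is all one has.

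The paper's proof sidesteps this by going through $PQK$ rather than $QPK$. Writing $P = P^2 = PQ + P(P-Q)$ and using $(P-Q)K \subset \tfrac{1}{2}B_2^n \subset \tfrac{1}{2}K$ gives
\[
PK \subset PQK + \tfrac{1}{2}\,PK.
\]
The point is that the error term is $\tfrac{1}{2}PK$ --- the \emph{same} set as on the left --- so a support-function argument absorbs it: $h_{PK} \le h_{PQK} + \tfrac{1}{2}h_{PK}$ yields $PK \subset 2\,PQK$. A single application of Theorem~\ref{Matilla} with $f=P$ (Lipschitz constant~$1$) then gives $|PQK| \le |QK|$, and the factor $2$ comes out exactly. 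In your route the error lands in $\tfrac12 QK$ rather than in $\tfrac12 QPK$, so this absorption is unavailable, and you are forced to stack the $\tfrac32$ on top of a separate transport factor. Ironically, your first abandoned instinct (``$P(QK)$ versus $PK$'') was pointing at exactly the right comparison.
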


\begin{proof}
Note that
\begin{align*}
P &= P^2 = PQ + P(P -Q).
\end{align*}
Since $K$ is in John's position, $B_2^n \subset K \subset \sqrt{n}B_2^n$.
Using that  $\norm{P-Q} \leq \frac{1}{2\sqrt{n}}$, we observe
$$
  (P -Q)K\subset \sqrt{n} (P -Q) B_2^n \subset \frac{1}{2}\, B_2^n \subset \frac{1}{2}\, K.
$$
This implies
$$
  PK   \subset  PQ\, K + \frac{1}{2}\, PK.
$$
Therefore for every $x\in \R^n$ we have
$$
   h_{PK} (x) \leq h_{PQK} (x) + \frac{1}{2}\,  h_{PK} (x),
$$
so $h_{PK} (x) \leq 2 h_{PQK} (x)$. This means $PK   \subset 2 PQ\, K$.

Finally we  apply Theorem \ref{Matilla} with $m:=n$, $s:=k$, $f:=P$ and $A:=QK$ to obtain
$$
  |PK|^{\frac{1}{k}} \leq 2|PQK|^{\frac{1}{k}} \leq 2 |QK|^{\frac{1}{k}},
$$
using that the Lipschitz constant of the mapping $P$ is obviously one and simplifying the constants
to pass from the Hausdorff to the Lebesgue measure.
\end{proof}

Next we introduce  a variant of Gluskin's random polytopes. Instead of considering the absolute convex hull
of points taken uniformly on the unit sphere we are going to work with Gaussian random vectors.
The reason for doing this is that we want to deal with projections of these bodies,
and the Gaussian measure is more suitable for this purpose.
Let $N > n$ and $g_1, \dots, g_N$ be standard independent Gaussian vectors in $\R^n$.
We consider the symmetric  polytope
$$Z_N = Z_N(\omega) = \absconv\{\sqrt{n}e_1, \dots, \sqrt{n}e_n,g_1,\dots,g_N\}.$$
For  basic properties of Gaussian polytopes we refer  to \cite{mankiewicz2003quotients}.
It is well known that the Euclidean norm of a Gaussian vector in $\R^n$ is well concentrated about it's
average, which is essentially $\sqrt {n}$. We will need the following lemma, the standard proof of which is
provided for the sake of completeness (for simplicity we write just $\|\cdot\|$ for $\|\cdot\|_2$).
\index{Gaussian polytopes}
\begin{lemma}\label{gausnorm}
Let $n\geq 1$ and let $g$ be a standard Gaussian vector  in $\R^n$. Then for every $\lambda \geq 2 \sqrt{n}$,
$$
 \Pro \left\{ \| g \| \geq  \lambda \right\} \leq \exp(- \lambda^2 /8).
$$
In particular,
$$
 \Pro \left\{ \| g \| \geq  2 \sqrt{n} \right\} \leq \exp(- n/2).
$$
Moreover, for $n\geq 50$,
$$
 \Pro \left\{ \| g \| \leq   \sqrt{n}/4 \right\} \leq \exp(- n/4).
$$
\end{lemma}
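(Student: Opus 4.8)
The plan is to use the Laplace transform (Chernoff) method, exploiting that $\|g\|^2 = \sum_{i=1}^n g_i^2$ is a sum of $n$ independent squared standard Gaussians. The elementary ingredient is the identity $\E e^{t g_i^2} = (1-2t)^{-1/2}$ for $t<1/2$, which by independence gives $\E e^{t\|g\|^2} = (1-2t)^{-n/2}$ for $t<1/2$, and likewise $\E e^{-s\|g\|^2} = (1+2s)^{-n/2}$ for every $s>0$. Everything else is an application of Markov's inequality to an exponential moment.

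For the upper tail I would apply Markov to $e^{\|g\|^2/4}$: for any $\lambda>0$,
$$
\Pro\{\|g\|\geq\lambda\} = \Pro\{e^{\|g\|^2/4}\geq e^{\lambda^2/4}\} \leq e^{-\lambda^2/4}\,\E e^{\|g\|^2/4} = e^{-\lambda^2/4}\,2^{n/2}.
$$
When $\lambda\geq 2\sqrt n$ we have $\lambda^2\geq 4n$, hence $2^{n/2}=e^{(n\ln 2)/2}\leq e^{n/2}\leq e^{\lambda^2/8}$, and the right-hand side is at most $e^{-\lambda^2/4+\lambda^2/8}=e^{-\lambda^2/8}$. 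Specializing $\lambda=2\sqrt n$ yields the stated $e^{-n/2}$.

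For the lower tail I would again use Markov, now on $e^{-s\|g\|^2}$ with $s>0$:
$$
\Pro\{\|g\|\leq \sqrt n/4\} = \Pro\{e^{-s\|g\|^2}\geq e^{-sn/16}\} \leq e^{sn/16}(1+2s)^{-n/2} = \left(\frac{e^{s/16}}{(1+2s)^{1/2}}\right)^{n}.
$$
Choosing $s$ to be a fixed constant (for instance $s=1$, where the base $e^{1/16}/\sqrt 3$ is strictly below $e^{-1/4}$) already gives $\Pro\{\|g\|\leq\sqrt n/4\}\leq e^{-n/4}$; a near-optimal $s$ (around $s=15/2$) would give an even cleaner constant. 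An alternative route avoiding the moment generating function is the direct volume estimate $\Pro\{\|g\|\leq r\}\leq (2\pi)^{-n/2}\,|rB_2^n| = (2\pi)^{-n/2} r^n |B_2^n|$ combined with $|B_2^n|^{1/n}\leq \sqrt{2\pi e/n}$, which with $r=\sqrt n/4$ gives $(\sqrt e/4)^n$; it is in this second approach that a hypothesis like $n\geq 50$ could be used to make the $|B_2^n|^{1/n}$ bound clean, though it is not needed for the moment-generating-function argument.

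There is essentially no genuine obstacle here: the statement is a routine Chernoff computation, and the only care required is in selecting the free parameters (the $1/4$ in the first estimate and the $s$ in the second) so that the exponential prefactor $2^{n/2}$, respectively $(1+2s)^{n/2}$, is comfortably dominated, after which the remaining numerical inequalities are trivial.
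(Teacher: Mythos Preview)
Your proof is correct and takes a genuinely different route from the paper. The paper invokes the Gaussian concentration inequality for Lipschitz functions, $\Pro\{|\|g\|-\E\|g\||\geq s\}\leq 2e^{-s^2/2}$, together with the elementary bound $\E\|g\|\leq\sqrt n$ for the upper tail; for the lower tail it first shows $(\E\|g\|)^2\geq n-4$ (by integrating the concentration bound), and this is where the hypothesis $n\geq 50$ is used, to guarantee $\E\|g\|\geq\sqrt n(1/4+1/\sqrt2)$ so that concentration with $s=\sqrt{n/2}$ applies. Your Chernoff argument on the $\chi^2$-distribution is more elementary---it avoids the Lipschitz concentration black box entirely---and, as you noted, the lower-tail bound with $s=1$ holds for all $n\geq 1$, so the restriction $n\geq 50$ is an artifact of the paper's method rather than of the statement. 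The trade-off is that the paper's approach generalizes immediately to any $1$-Lipschitz function of $g$, whereas yours is specific to $\|g\|^2$.
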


\begin{proof}
The  Gaussian concentration inequality (see \cite{cirel1976norms}  or inequality (2.35)
in \cite{ledoux2001concentration})  states   for every $s>0$,
$$
  \max\Big\{ \P \left\{ \|g\| - \E \|g\|\geq s\right\}, \, \P \left\{ \E \|g\|- \|g\|)\geq s\right\}\Big\} \leq
   \exp \left({-s^2/2}\right) .
$$
Since, $\E \|g\| \leq (\E \|g\|^2)^{1/2} =\sqrt n$, this yields  the first and the second bounds.
To obtain the third bound, denote $a=\E \|g\|$ and observe
$$
  n-a^2 = \E\left(\|g\| - a\right)^2 =\int^{\infty}_{0} 2t  \P \left\{ |\|g\| - a|\geq t \right\}\, dt
  \leq \int^{\infty}_{0} 4 t e^{-t^2/2} dt= 4 .
$$
Thus $a^2 \geq n-4$ and hence for $n\geq 50$, $a\geq \sqrt{n} (1/4 + 1/\sqrt{2})$.
Applying the concentration inequality with $s =  \sqrt{n/2}$, we obtain
\begin{align*}
  \Pro \left\{ \| g \| \leq  \sqrt{n}/4 \right\} &\leq
  \Pro \left\{ a - \| g \| \geq a- \sqrt{n}/4  \right\} \leq
   \Pro \left\{ a - \| g \| \geq s  \right\}
    \\&
   \leq   \exp \left({-s^2/2}\right) = \exp \left({-n/4}\right),
\end{align*}
which completes the proof.
\end{proof}

\begin{remark}\label{om-zer}
Below we denote
$$
 \Omega_0(n,N):= \{\omega\, \,  |\, \, \forall i\leq N :\, \,
   \sqrt{n}/4 \leq  \| g _i\| \leq   2\sqrt{n} \}.
$$
Lemma~\ref{gausnorm} yields
\begin{align} \label{prob}
    \Pro(\Omega_0(n,N)) \geq 1 - 2 N e^{-n/4}.
\end{align}
Note that on $\Omega_0(n,N)$ we have
\begin{align} \label{om-inc}
  B_2^n \subset Z_N(\omega) \subset 2\sqrt{n} B_2^n .
\end{align}
In fact, Gluskin proved that there exist absolute constants $C, c >0$ such that  for $Cn\leq N\leq e^n$ one has
$$
 c \sqrt{\log  \left(\frac{N}{n}\right)}\,   B_2^n \subset Z_N(\omega) \subset 2\sqrt{n} B_2^n
$$
with probability at least $1-e^{-n}$
(see  \cite[Theorem~2]{gluskin1989deviation} or the remark following the proof of Theorem~2 in \cite{gluskin1989extremal}).
\end{remark}



\medskip

The following theorem establishes a bound for the volume of projections of Gluskin's polytopes.

\begin{theorem} \label{volume estimation}
There exists an absolute constant $C>0$ such that the following holds.
Let $k\leq n$ and $2n\leq N\leq n e^{k}$.
Then there exists a set $\Omega_1(n,N) \subset \Omega_0(n,N)$ such that
for every $\omega \in  \Omega_1(n,N)$ and every $Q \in \mathcal{P}^k(n)$ one has
\begin{align}
 |QZ_N(\omega)|^{1/k} \leq C \max\left\{\frac{\sqrt{n}}{k} \sqrt{\log \log \log \left(\frac{N}{k}\right)}, \,
  \frac{\sqrt{\log(\frac{N}{k})}}{\sqrt{k}}\right\}
\end{align}
  and such that
 $$
    \mathbb{P}(\Omega_1(n,N)) \geq 1- 4 N e^{-n/4}.
 $$
\end{theorem}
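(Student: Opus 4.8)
The plan is to bound $|QZ_N(\omega)|^{1/k}$ uniformly over all rank-$k$ projections $Q$ by combining a union bound over a net of projections with an individual tail estimate for a fixed $Q$. First I would fix $Q \in \PP^k(n)$ and analyze $QZ_N(\omega) = \absconv\{\sqrt{n}\,Qe_1,\dots,\sqrt{n}\,Qe_n, Qg_1,\dots,Qg_N\}$, which is a symmetric polytope in the $k$-dimensional space $Q\R^n$ spanned by at most $n+N$ vertices. The vectors $Qg_i$ are i.i.d.\ standard Gaussian vectors in $Q\R^n \cong \R^k$, while the $\sqrt{n}\,Qe_j$ are deterministic vectors of Euclidean norm at most $\sqrt{n}$. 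The volume of an absolute convex hull of $M$ points in $\R^k$ is controlled by a classical estimate: if $V = \absconv\{x_1,\dots,x_M\}$ then $|V|^{1/k} \lesssim \frac{1}{\sqrt{k}}\,\bigl(\text{something like } \max_i\|x_i\| \cdot (M/k)^{?}\bigr)$; more precisely one uses that $V \subset \sqrt{\frac{k}{M}}\,(\text{radius})\cdot B_2^k$ is false in general, so instead I would use the sharp bound $|\absconv\{x_1,\dots,x_M\}|^{1/k} \le \frac{C}{k}\sqrt{\log(eM/k)}\,\max_i\|x_i\|$ valid when $M \ge k$ (this is the standard volumetric estimate behind Gluskin-type arguments, e.g.\ via bounding the polar body from below by $B_2^k/\max\|x_i\|$ intersected appropriately, or directly: the number of $k$-subsets is $\binom{M}{k} \le (eM/k)^k$ and each simplex has volume $\le (\max\|x_i\|)^k/k!$, then $|V| \le \binom{M}{k}(\max\|x_i\|)^k/k!$, giving $|V|^{1/k} \le \frac{eM}{k}\cdot\frac{\max\|x_i\|}{(k!)^{1/k}} \sim \frac{eM\,\max\|x_i\|}{k\sqrt{k}/e}$, which is too weak for large $M$ and must be refined).

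The refinement, and the heart of the matter, is to not use the trivial bound $\max_i\|Qg_i\| \lesssim \sqrt{n}$ for all $N$ vertices, but to exploit that only $O(k)$ of the Gaussian vertices can have large norm. Concretely: among $Qg_1,\dots,Qg_N$, the typical norm is $\sqrt{k}$ (since $Qg_i$ is Gaussian in $\R^k$), and by Lemma~\ref{gausnorm} applied in dimension $k$, the probability that $\|Qg_i\| \ge \lambda$ decays like $e^{-\lambda^2/8}$ for $\lambda \ge 2\sqrt{k}$. So for a threshold $\lambda$, the expected number of indices with $\|Qg_i\| \ge \lambda$ is at most $Ne^{-\lambda^2/8}$; choosing $\lambda \sim \sqrt{\log(N/k)}$ makes this $\lesssim k$. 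I would split $Z_N$'s vertex set into the "heavy" vertices (the $n$ deterministic ones plus the $\lesssim k$ Gaussian ones with large norm, total $O(n)$ of norm $\lesssim\sqrt{n}$) and the "light" Gaussian vertices (norm $\lesssim \sqrt{\log(N/k)}$, count $\le N$). Then $QZ_N \subset \co(A \cup B)$ where $A$ is the hull of heavy vertices and $B$ of light ones, and $|A+B|^{1/k} \le |A|^{1/k} + |B|^{1/k}$ by Brunn--Minkowski. For $A$: hull of $O(n)$ vectors of norm $\lesssim\sqrt{n}$ in $\R^k$, so $|A|^{1/k} \lesssim \frac{\sqrt{n}}{k}\sqrt{\log(n/k)}$. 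For $B$: hull of $\le N$ vectors of norm $\lesssim\sqrt{\log(N/k)}$, so $|B|^{1/k} \lesssim \frac{\sqrt{\log(N/k)}}{k}\sqrt{\log(N/k)} = \frac{\log(N/k)}{k}$. Hmm, this gives $\frac{\log(N/k)}{k}$ rather than the claimed $\frac{\sqrt{\log(N/k)}}{\sqrt{k}}$; the improvement must come from iterating the splitting (a multi-scale decomposition: vertices with norm in dyadic ranges $[\lambda_j,\lambda_{j+1}]$, where the count in range $j$ is $\lesssim k$ once $\lambda_j \gtrsim \sqrt{\log(N/k)}$, and summing the geometric-type series). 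This iteration, and bookkeeping the resulting $\log\log\log$ term (which arises from the number of dyadic scales needed, roughly $\log\log(N/k)$ scales, and then a further logarithm from optimizing), is where the delicate work lies.

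For the union bound: I would fix a $\delta$-net $\NN_\delta$ on $\PP^k(n)$ in the operator metric with $|\NN_\delta| \le (C/\delta)^{kn}$ (standard, since $\PP^k(n)$ sits inside the unit ball of $n\times n$ matrices in a manifold of dimension $\sim k(n-k)$). Taking $\delta = \frac{1}{2\sqrt{n}}$ and using Lemma~\ref{lema medidas} (on the event $\Omega_0(n,N)$ where $Z_N$ is sandwiched, and $Z_N$ in John position after the inclusion $B_2^n \subset Z_N \subset 2\sqrt{n}B_2^n$ from \eqref{om-inc}), it suffices to control $|QZ_N|^{1/k}$ for $Q$ ranging over the net and lose only a factor $2$. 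The per-$Q$ failure probability from the tail/counting argument above is of order $\exp(-ck\log(N/k)) \le \exp(-ckn \cdot \text{(something)})$—I need it to beat $|\NN_\delta| \le (C\sqrt{n})^{kn}$, i.e.\ the exponent must dominate $kn\log n$; since $N$ can be as small as $2n$ this requires care, and indeed the statement allows the slightly worse probability $1 - 4Ne^{-n/4}$, suggesting the heavy-vertex count is controlled by a cruder argument (just: at most $k$ Gaussian vertices exceed norm $2\sqrt{n}$, each with probability $e^{-n/2}$ by Lemma~\ref{gausnorm} in dimension $n$, union bound over $\binom{N}{k+1} \le N^{k+1}$ choices and over the net). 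I expect the main obstacle to be making the multi-scale volume decomposition yield exactly the $\max\{\frac{\sqrt{n}}{k}\sqrt{\log\log\log(N/k)},\ \frac{\sqrt{\log(N/k)}}{\sqrt{k}}\}$ bound with the correct triple-logarithm, while keeping the probability estimate clean; everything else (net, Lemma~\ref{lema medidas}, Brunn--Minkowski, Lemma~\ref{gausnorm}) is routine assembly.
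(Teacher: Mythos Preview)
Your overall architecture---net on $\PP^k(n)$, tail control on the projected Gaussians, volume estimate for the resulting polytope---matches the paper's. But two concrete choices in your sketch block the triple logarithm and leave a real gap.

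\textbf{Volume estimate.} The paper does not decompose via Brunn--Minkowski. It proves and uses a refined bound (Lemma~\ref{lemma boundGnorm}): for any $\alpha \ge \sqrt{2}\max_i\|w_i\|$,
\[
|\absconv\{w_1,\dots,w_M\}|^{1/k} \le \frac{\sqrt{2\pi}\,e\,\alpha}{k}\,\exp\Bigl(\frac{2}{k}\sum_{i} e^{-\alpha^2/(2\|w_i\|^2)}\Bigr),
\]
obtained by bounding the Gaussian measure of the polar via \v{S}id\'ak's inequality and then applying Blaschke--Santal\'o. This takes the full profile of the $\|w_i\|$ into account at once and is what makes the optimization below clean. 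Your iterated splitting $|A+B|^{1/k}\le|A|^{1/k}+|B|^{1/k}$ is lossy; the paper's own remark after Lemma~\ref{lemma boundGnorm} notes that the crude estimate (\ref{G-C-P}) already gives $|QZ_N|^{1/k}\lesssim\frac{\sqrt n}{k}\sqrt{\log(N/k)}$ deterministically on $\Omega_0$, and it is not clear your multi-scale version does better.

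\textbf{Net parameter.} You fix $\delta=\frac{1}{2\sqrt n}$ in order to invoke Lemma~\ref{lema medidas} for volume stability. The paper instead uses a coarser $\varepsilon\in[2\sqrt{k/n},\tfrac12]$ and transfers the bound not at the level of volumes but at the level of the decreasing rearrangement of the norms: on $\Omega_0$ one has $\|Qg_i\|\le\|Q_0g_i\|+2\varepsilon\sqrt n$, so the rearranged norms satisfy $b^*_m\le 2\lambda_m$ with $\lambda_m=\sqrt{Cnk\log(1/\varepsilon)/m}$ in a middle range and $\lambda_m=2\varepsilon\sqrt n$ for large $m$. Feeding this profile into the lemma above and choosing
\[
\varepsilon=\max\Bigl\{\sqrt{\tfrac{\log\log\log(N/k)}{\log(N/k)}},\ 2\sqrt{\tfrac{k}{n}}\Bigr\}
\]
is precisely what produces the $\sqrt{\log\log\log}$. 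With your fixed $\delta=\frac{1}{2\sqrt n}$ the factor $\log(1/\delta)\sim\log n$ is forced into the thresholds $\lambda_m$ needed to beat the net size, and you lose at least a $\sqrt{\log\log n}$.

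A minor point: the probability $1-4Ne^{-n/4}$ comes entirely from $\Omega_0(n,N)$ (all $\|g_i\|\in[\sqrt n/4,\,2\sqrt n]$), not from a heavy-vertex count as you guess; the bad-rearrangement event over the net has probability $\le 2\exp(-C_0nk\log(1/\varepsilon))$, which is far smaller and simply absorbed.
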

To prove the theorem we will need two lemmas.
The first one on the cardinality of $\varepsilon$-nets in $\mathcal P^k(n)$ is due to Szarek \cite{szarek1982nets}.
The second lemma bounds the volume of a polytope in terms of the lengths of the vertices.

\begin{lemma}\label{lemma net}
There exists an absolute positive constant $C_0$ such that for
every $0< \varepsilon <1$ the set $\mathcal P^k(n)$ admits an $\varepsilon$-net
$\Pi$ of cardinality at most $$ \vert \Pi \vert \leq \left(\frac{C_0}{\varepsilon}\right)^{nk}.$$
\end{lemma}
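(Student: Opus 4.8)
The plan is to realize $\PP^k(n)$ as the image of the Stiefel manifold $V:=\{U\in\R^{n\times k}:U^TU=I_k\}$ (the $n\times k$ matrices with orthonormal columns) under the map $U\mapsto UU^T$, to produce a net of $V$ by a purely volumetric argument inside the $nk$-dimensional space $\R^{n\times k}$, and then to push this net back into $\PP^k(n)$ via a polar retraction so that its points are honest rank-$k$ projections. This last point matters because the lemma is later used together with Lemma~\ref{lema medidas}, which applies to genuine elements of $\PP^k(n)$.

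First I would equip $\R^{n\times k}$ with the operator norm $\|\cdot\|=\|\cdot:\ell_2^k\to\ell_2^n\|$ and observe that $V$ sits inside the unit ball $B$ of this norm (if $U^TU=I_k$ then $\|Ux\|_2=\|x\|_2$). By the standard volumetric estimate for a norm ball in dimension $nk$ (take a maximal $\delta$-separated subset of $B$), for any $0<\delta<1$ there is a $\delta$-net $\MM\subset B$ with $|\MM|\le(3/\delta)^{nk}$; I will eventually choose $\delta\sim\varepsilon$. Next, for each $A\in\MM$ with $\|A^TA-I_k\|\le 1/2$ the matrix $A^TA$ is positive definite, so one can define the polar retraction $\pi(A):=A(A^TA)^{-1/2}\in V$ (indeed $\pi(A)^T\pi(A)=I_k$) and put $P_A:=\pi(A)\pi(A)^T\in\PP^k(n)$. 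The net $\Pi$ will be the set of all such $P_A$; automatically $\Pi\subseteq\PP^k(n)$ and $|\Pi|\le|\MM|\le(3/\delta)^{nk}$.

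It then remains to check that $\Pi$ is an $\varepsilon$-net of $(\PP^k(n),\|\cdot\|)$. Given $Q=UU^T$ with $U\in V$, pick $A\in\MM$ with $\|A-U\|\le\delta$. The identity $A^TA-I_k=(A-U)^TA+U^T(A-U)$ together with $\|A\|\le 1+\delta$ shows $\|A^TA-I_k\|\le 3\delta$, which for $\delta$ small is $\le 1/2$, so $A$ is of the admissible type and the spectrum of $A^TA$ lies in $[1/2,3/2]$. The key estimate is then the Lipschitz control of the polar retraction: applying the elementary inequalities $\lambda^{-1/2}\le 2$ and $|\lambda^{-1/2}-1|\le 2|\lambda-1|$ on $[1/2,3/2]$ spectrally to $A^TA$ gives $\|(A^TA)^{-1/2}\|\le 2$ and $\|(A^TA)^{-1/2}-I_k\|\le 6\delta$; writing $\pi(A)-U=(A-U)(A^TA)^{-1/2}+U\big((A^TA)^{-1/2}-I_k\big)$ yields $\|\pi(A)-U\|\le 8\delta$, and finally $P_A-Q=(\pi(A)-U)\pi(A)^T+U(\pi(A)-U)^T$ together with $\|\pi(A)\|=\|U\|=1$ gives $\|P_A-Q\|\le 16\delta$. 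Choosing $\delta=\varepsilon/16$ (which is $<1/16<1$) then makes $\Pi$ an $\varepsilon$-net with $|\Pi|\le(48/\varepsilon)^{nk}$, so the lemma holds with $C_0=48$.

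The only step that requires genuine care is the passage from ``$A$ lies within $\delta$ of a point $U\in V$'' to ``$\pi(A)$ lies within $O(\delta)$ of $U$'', i.e.\ the Lipschitz behaviour of the retraction $A\mapsto A(A^TA)^{-1/2}$ near $V$; the spectral bound on $(A^TA)^{-1/2}$ is exactly what is needed. Everything else is routine: the covering number of a convex body by its own dilates, and bookkeeping with operator norms. Alternatively one could follow Szarek's original route through nets of the orthogonal group $O(n)$, but since $UU^T$ depends only on the first $k$ columns of an orthogonal matrix, this reduces to netting $V$ and leads to essentially the same computation.
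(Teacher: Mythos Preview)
Your argument is correct. The paper does not actually prove this lemma; it merely states it and attributes it to Szarek \cite{szarek1982nets}. Your proposal therefore supplies a self-contained proof where the paper gives none, and the approach---netting the Stiefel manifold inside the operator-norm ball of $\R^{n\times k}$ and then retracting via $A\mapsto A(A^TA)^{-1/2}$---is a clean and standard way to obtain the bound $(C_0/\varepsilon)^{nk}$. The Lipschitz estimate for the polar retraction is handled correctly, and the bookkeeping (in particular that $\delta=\varepsilon/16<1/16$ forces $\|A^TA-I_k\|\le 3\delta<1/2$, so the retraction is well defined for every relevant $A$) is sound. As you note, Szarek's original argument proceeds through nets of the orthogonal group, which amounts to the same thing once one observes that $UU^T$ depends only on the span of the first $k$ columns.
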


\begin{lemma}\label{lemma boundGnorm}
Let $(w_i)_{i=1}^N \subset \R^n$ be a collection of vectors.
For every $\alpha \geq \sqrt{2} \, \max\limits_{i \leq N} \{\norm{w_i}_2\}$ we have
\begin{align*}
    \vert \absconv\{w_1, \dots, w_N\} \vert^{1/n} \leq \frac{\sqrt{2\pi} e \alpha}{n} \exp \left( \frac{2}{n} \sum_{i=1}^N
    \exp\left(- \frac{\alpha^2}{2\Vert w_i\Vert_2^2}\right) \right).
\end{align*}
\end{lemma}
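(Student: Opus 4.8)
The plan is to pass to the polar body and combine a Gaussian measure estimate with the Blaschke--Santal\'o inequality \eqref{santalo}. First I would reduce to the case where $w_1,\dots,w_N$ are nonzero and span $\R^n$: otherwise $P:=\absconv\{w_1,\dots,w_N\}$ has empty interior, the left-hand side is $0$, and there is nothing to prove. Then $P$ is a centrally symmetric convex body and, computing its support function, its polar is an intersection of symmetric slabs,
\[
 P^{\circ}\ =\ \bigcap_{i=1}^N\{x\in\R^n:\ |\langle x,w_i\rangle|\le 1\}.
\]
The idea is to bound $\vn{P^{\circ}}$ from below and then invoke $\vn{P}\le |B_2^n|^{2/n}/\vn{P^{\circ}}$, the upper estimate in \eqref{santalo}, together with the Stirling bound $|B_2^n|^{2/n}\le 2\pi e/n$.

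For the lower bound on $|P^{\circ}|$ I would bring in a standard Gaussian vector $g$ in $\R^n$, with law $\gamma_n$. Since the Gaussian density is at most $(2\pi)^{-n/2}$, one has $|\alpha P^{\circ}|\ge(2\pi)^{n/2}\gamma_n(\alpha P^{\circ})$, so it suffices to estimate $\gamma_n(\alpha P^{\circ})=\Pro\{\forall i\le N:\ |\langle g,w_i\rangle|\le\alpha\}$ from below. Each $\langle g,w_i\rangle$ is a centered Gaussian with variance $\|w_i\|_2^2$, so the elementary one-dimensional tail bound gives $\Pro\{|\langle g,w_i\rangle|>\alpha\}\le\exp(-\alpha^2/(2\|w_i\|_2^2))=:u_i$, and the hypothesis $\alpha\ge\sqrt2\max_i\|w_i\|_2$ forces $u_i\le e^{-1}<1/2$. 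The crucial point — and the only step that is not routine — is that the naive union bound is useless once $\sum_i u_i\ge 1$, and must be replaced by \v{S}id\'ak's inequality, which applies exactly to symmetric slabs generated by a Gaussian vector:
\[
 \gamma_n(\alpha P^{\circ})\ \ge\ \prod_{i=1}^N\Pro\{|\langle g,w_i\rangle|\le\alpha\}\ \ge\ \prod_{i=1}^N(1-u_i)\ \ge\ \exp\!\Big(-2\sum_{i=1}^N u_i\Big),
\]
where the last inequality uses $1-u\ge e^{-2u}$ for $0\le u\le 1/2$. This is precisely what produces the exponential factor in the statement.

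Finally I would assemble the pieces. Setting $S:=\sum_{i=1}^N u_i$, the displays above give $|P^{\circ}|\ge(2\pi/\alpha^2)^{n/2}e^{-2S}$, hence $\vn{P^{\circ}}\ge\sqrt{2\pi}\,\alpha^{-1}e^{-2S/n}$, and therefore
\[
 \vn{P}\ \le\ \frac{|B_2^n|^{2/n}}{\vn{P^{\circ}}}\ \le\ \frac{2\pi e}{n}\cdot\frac{\alpha}{\sqrt{2\pi}}\,e^{2S/n}\ =\ \frac{\sqrt{2\pi}\,e\,\alpha}{n}\exp\!\Big(\frac{2}{n}\sum_{i=1}^N e^{-\alpha^2/(2\|w_i\|_2^2)}\Big),
\]
which is exactly the claimed inequality. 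The remaining ingredients (the polar identity, the one-dimensional Gaussian tail, the comparison of volume with Gaussian measure, Santal\'o, and Stirling) are all standard, so I expect the main obstacle to be simply recognizing \v{S}id\'ak's inequality as the correct replacement for the union bound.
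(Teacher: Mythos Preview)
Your proof is correct and follows essentially the same approach as the paper: pass to the polar (an intersection of symmetric slabs), bound its Gaussian measure from below via \v{S}id\'ak's inequality together with the one-dimensional tail estimate $\Pro\{|Z|>s\}\le e^{-s^2/2}$ and the elementary $1-u\ge e^{-2u}$, convert Gaussian measure to Lebesgue volume, and finish with Blaschke--Santal\'o. The only cosmetic differences are that the paper sketches a proof of the tail bound and uses the range $u\in(0,3/4)$ for the last elementary inequality, while you add the (helpful) explicit reduction to the full-dimensional case.
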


\begin{remark}
Assuming that $\|w_i\|_2\leq 1$ for every $i\leq N$ and letting
$\alpha = \sqrt{2\log (2N/n) }$ we observe the well known bound (see \cite{barany1988approximation, CP, gluskin1989extremal})
\begin{align}\label{G-C-P}
    \vert \absconv\{w_1, \dots, w_N\} \vert^{1/n} \leq \frac{C \sqrt{\log (2N/n) }}{n} .
\end{align}
Our proof of Lemma~\ref{lemma boundGnorm} follows the proof of this bound with corresponding adjustments.
Note also, that using a standard estimate (\ref{G-C-P}) instead of Lemma~\ref{lemma boundGnorm}, would lead to the bound
\begin{align}
 |QZ_N(\omega)|^{1/k} \leq C \frac{\sqrt{n}}{k} \sqrt{ \log \left(\frac{N}{k}\right)}
\end{align}
in Theorem~\ref{volume estimation}, and thus, to the bound
$$
 \vr(P K, Q L) \geq \frac{c  k}{   \sqrt{   n \log \frac{n\log n}{k}   }   }
$$
in Theorem~\ref{main thm}.
\end{remark}

\begin{proof}[Proof of Lemma~\ref{lemma boundGnorm}]
%
%
%
For simplicity we write $\norm{\cdot}$ for $\norm{\cdot}_2$.
Fix $\alpha \geq \sqrt{2} \, \max\limits_{i \leq N} \{\norm{w_i}\}$ and set  $P_i:=\{x \in \R^n \; : \; |\langle x , w_i \rangle| \leq \alpha\}.$
Consider
$$
 K := \frac{1}{\alpha}\bigcap\limits_{i=1}^N P_i.
$$
 Note that $K^\circ = \absconv\{w_1, \dots, w_N\}$ and that
\begin{align*}
\gamma_n(\alpha K) = \gamma_n \left(\bigcap\limits_{i=1}^N P_i\right) \geq \prod\limits_{i=1}^N \gamma_n(P_i),
\end{align*}
where $\gamma_n$ denotes the Gaussian measure on $\R^n$ and where the inequality follows from
 \v{S}id\'{a}k's lemma (\cite{vsidak1967rectangular},  \cite{gluskin1989extremal}) or from Gaussian correlation inequality (\cite{royen2014simple}, see also \cite{latala2017royen}).

Clearly,
\begin{align*}
    \gamma_n(P_i) = \frac{1}{\sqrt{2\pi}}\int\limits_{-\frac{\alpha}{\norm{w_i}}}^{\frac{\alpha}{\norm{w_i}}}e^{-\frac{t^2}{2}}dt.
\end{align*}
Considering the function
$$
 f(s):= e^{-\frac{s^2}{2}} - \frac{1}{\sqrt{2 \pi}}\int_{-s}^s e^{-\frac{t^2}{2}} dt,
$$
it is not difficult to see that
\begin{align*}
    \frac{1}{\sqrt{2 \pi}}\int_{-s}^s e^{-\frac{t^2}{2}} dt \geq 1 - e^{-\frac{s^2}{2}}.
\end{align*}
Therefore,
\begin{align*}
  \gamma_n(\alpha K) \geq   \prod_{i=1}^N\left(1 - e^{-\frac{\alpha^2}{2\norm{w_i}^2}}\right).
  \end{align*}
Note that, for $x \in (0,\frac{3}{4})$, $1 - x \geq e^{-2x}$. Using that  $\alpha^2 \geq 2 \norm{w_i}^2$ for all $i \leq N$,
we obtain
  \begin{align*}
\gamma_n(\alpha K) \geq \prod_{i=1}^N \exp\left(-2 e^{-\frac{\alpha^2}{2\norm{w_i}^2}}\right) = \exp \left(-2\sum_{i=1}^N e^{-\frac{\alpha^2}{2\norm{w_i}^2}} \right).
  \end{align*}
  Since $|\alpha K| = \alpha^n |K| \geq (2\pi)^{\frac{n}{2}}\gamma_n(\alpha K)$,
  \begin{align*}
      |K|^{\frac{1}{n}} \geq \frac{\sqrt{2 \pi}}{\alpha}\, \exp \left(-\frac{2}{n}\sum_{i=1}^N e^{-\frac{\alpha^2}{2\norm{w_i}^2}} \right).
  \end{align*}
  Finally, the Blashke-Santaló inequality  \eqref{santalo} implies
  \begin{align*}
      |K^\circ|^\frac{1}{n} \leq \frac{|B_2^n|^{\frac{2}{n}}}{|K|^\frac{1}{n}} \leq \frac{2\pi e \alpha}{\sqrt{2\pi}n}\, \exp \left(\frac{2}{n}\sum_{i=1}^N e^{-\frac{\alpha^2}{2\norm{w_i}^2}} \right).
  \end{align*}
  This completes  the proof.
  \end{proof}
We are now ready to prove Theorem \ref{volume estimation}.

\begin{proof}[Proof of Theorem \ref{volume estimation}]

If $k\geq n/16$ the result follows from (\ref{G-C-P}) and  (\ref{prob}), therefore below we assume
$k<n/16$.  Fix $\eps \in  [2\sqrt{k/n}, 1/2]$ to be defined later. Let $C_0$ denote the constant from Lemma~\ref{lemma net}.
Denote $C=40 C_0$ (without loss of generality we assume $C\geq 5$, hence $C\geq 200$) and  set
$$
  m_0= \frac{C k \log (1/\eps)}{4} \quad \mbox{ and } \quad  m_1= \frac{C k \log (1/\eps)}{4\eps^2}=\frac{10 C_0 k \log (1/\eps)}{\eps^2}.
$$
Without loss of generality, we just assume for simplicity that $m_0$ and $m_1$ are integers. Consider the sequence  $(\lambda_m)_{m=1}^N$ defined by
$\lambda_m = 2\sqrt{n}$ for  $i\leq m_0$, $\lambda_m = 2\eps \sqrt{n}$ for  $i> m_1$, and
$$
   \lambda_m = \sqrt{\frac{C n k \log (1/\eps)}{m}} \quad \mbox{ for }\, m_0< i\leq m_1.
$$

Let  $g$ denote a standard Gaussian vector in $\mathbb R^n$. Note that for any fixed projection $Q_0 \in \mathcal P^k(n)$,
 $Q_0 (g)$ is a standard $k$-dimensional Gaussian vector.
Thus, by Lemma~\ref{gausnorm}, for every $t\geq 2\sqrt{k}$ we have
\begin{align}\label{gaussian inequality}
    \mathbb{P} \left\{ \omega \in \Omega\,\, |\,\, \Vert Q_0(g(\omega)) \Vert \geq t \right\} \leq e^{-t^2 /8}.
\end{align}

Let $g_1, \dots, g_N$ be standard Gaussian independent vectors in $\R^n$. For a fixed projection $Q_0$ in $\mathcal P^k(n)$ and for
$m\leq N$ consider the events
$$
 \mathbb A(m, Q_0) := \Big\{ \omega \in \Omega_0(n,N)\,\, | \, \,
    \#\{i : \Vert Q_0(g_i(\omega)) \Vert > \lambda_m \} \geq  m \Big\}.
$$
Note that on $\Omega_0(n,N)$ we have $\Vert g_i(\omega) \Vert\leq 2\sqrt{n}$, hence
$A(m, Q_0)= \emptyset $ for $m\leq m_0$. By $ A_{Q_0}$ denote the union (over $m$) of $A(m, Q_0)$,
that is
\begin{align*}
 \mathbb A_{Q_0} &= \Big\{ \omega \in \Omega_0(n,N)\,\, | \, \,   \exists  m \in \{1, \dots, N\} :\, \,
   \#\{i : \Vert Q_0(g_i(\omega)) \Vert > \lambda_m \} \geq m \Big\} \\&
   =\Big\{ \omega \in \Omega_0(n,N)\,\, | \, \,   \exists  m \in \{m_0+1, \dots, N\} :\, \,
   \#\{i : \Vert Q_0(g_i(\omega)) \Vert > \lambda_m \} \geq m \Big\}.
\end{align*}
To estimate the probability of $ A_{Q_0}$ we first note that
\begin{align}\label{necbound}
 \frac{eN}{m_1} \, e^{-\eps^2 n /2}\leq \frac{eN}{m_1} \, e^{-\eps^2 n /4} \leq \frac{1}{2}.
\end{align}
Indeed, consider the function
$$
 f(\eps):=\eps^{-2}  \, e^{\eps^2 n /4}.
$$
Since it is increasing on $[2/\sqrt{n}, \infty)$, using that  $\eps \geq 2\sqrt{k/n} \geq 2/\sqrt{n}$,
we observe that
$$
  f(\eps)\geq \frac{n}{4 k}  e^{k}.
$$
Thus, using that $\eps<1/2$, $N\leq n e^{k}$ and $C\geq 200$,
$$
   \frac{eN}{m_1} \, e^{-\eps^2 n /2}= \frac{4eN \eps^ 2 }{Ck\log(1/\eps)}\, e^{-\eps^2 n /2} \leq \frac{4 e n e^{k}}{Ck(\log 2) f(\eps)}
   \leq \frac{16 e   }{C\log 2}  \leq  \frac{1}{2}.
$$

Denote
$$
  p:= \exp({- C_0 nk \log (1/\eps)}).
$$
Using the union bound, the independence of $g_i$'s,   equations \eqref{gaussian inequality}, \eqref{necbound}, and
the standard bound
$$
 \sum_{m=0}^{\ell}  \binom{N}{m} \leq   \left(\frac{eN}{\ell}\right)^{\ell},
$$
  we obtain
\begin{align}
   \nonumber{P}(\mathbb {A}_{Q_0})  &\leq \sum_{m=m_0+1}^N  \mathbb{P}\left(\mathbb {A} (m, Q_0) \right) \leq \sum_{m=m_0+1}^N  \binom{N}{m}  e^{-\lambda_m^2 m/8} \\& \nonumber
\leq \sum_{m=m_0+1}^{m_1}  \binom{N}{m}  \exp({-C nk \log (1/\eps) /8}) + \sum_{m=m_1+1}^N  \binom{N}{m}  e^{-\eps^2 n m/2}\\&\nonumber
\leq \left(\frac{eN}{m_1}\right)^{m_1} p^5 + \sum_{m=m_1+1}^N  \left(\frac{eN}{m} e^{-\eps^2 n /2}\right)^{m}  \\&\nonumber
\leq \left(\frac{eN}{m_1}\right)^{m_1} p^5 + \left(\frac{eN}{m_1} e^{-\eps^2 n /2}\right)^{m_1} \sum_{m=1}^\infty  \left(\frac{1}{2} \right)^{m} \\&\nonumber
 =
 \left(\frac{eN}{m_1}\right)^{m_1} \left(p^5 + \exp({-C nk \log(1/\eps)/8})\right) \leq 2 \left(\frac{eN}{m_1}\right)^{m_1} p^5.
\end{align}
 Using \eqref{necbound} again, we estimate
$$
   \left(\frac{eN}{m_1}\right)^{m_1} \leq \exp\left(\frac{\eps^2 n m_1}{4}\right)   \leq
   \exp\left(  \frac{10 C_0 k n\log (1/\eps)}{4}\right)=    p^{- 2.5}.
$$
Hence,
\begin{align} \label{acotacion prob}
   \mathbb{P}(\mathbb {A}_{Q_0})  &\leq  2p^2.
\end{align}

For each $\omega \in \Omega_0(n,N)$ define the vector $a= a(\omega) \in \mathbb R^N$
by $a_i := \Vert Q_0(g_i(\omega)) \Vert$ for $ i \leq N$. Then, by definition,
 on $\Omega_0(n,N)\cap \mathbb{A}_{Q_0}^c$ we have
$$a^{*}_m \leq \lambda_m,$$
where $a^{*}$ stands for the decreasing rearrangement of  $a$.

Similarly, given $Q \in \PP^k(n)$ for each $\omega \in \Omega$ define the vector $b= b(\omega, Q) \in \mathbb R^N$
by $b_i := \Vert Q(g_i(\omega)) \Vert$ for $i \leq N$.
Let
$$\mathbb{B} := \left\{ \omega \in  \Omega_0(n,N)\,\, | \, \, \exists Q \in \PP^k(n)\,     \exists m\leq N : \,\,  b^*_m(\omega, Q) > 2\lambda_m  \right\}.$$


We now use an approximation argument. Let $Q \in \mathcal P^{k}(n)$ and consider $Q_0$ such that $\Vert Q - Q_0\Vert < \varepsilon.$
Then,
\begin{align*}
    \Vert Q(g_i(\omega)) \Vert & \leq \Vert Q_0(g_i(\omega)) \Vert + \Vert Q - Q_0 \Vert \Vert g_i(\omega) \Vert \\ & \leq  \Vert Q_0(g_i(\omega)) \Vert + \varepsilon \max\norm{g_i(\omega)}_2.
\end{align*}
Therefore, for $\omega \in \Omega_0(n,N)\cap  \mathbb{A}_{Q_0}^c$ and for every $m\leq N$ we have
$$
 b^*_m(\omega, Q) \leq a^*_m + 2\eps \sqrt{n} \leq \lambda_m + 2\eps \sqrt{n} \leq 2 \lambda_m.
$$
Let $\Pi \subset \PP^k(n)$ be an $\varepsilon$-net  of cardinality at most  $\left(\frac{C_0}{\varepsilon}\right)^{nk}\leq\frac{1}{p}$ given by Lemma~\ref{lemma net}. Then,
$$
 \mathbb{B} \subset   \bigcup_{Q_0 \in \Pi} A_{Q_0},
$$
and therefore, by (\ref{acotacion prob}),
\begin{align*}
    \Pro(\mathbb{B}) \leq 2p .
\end{align*}
Therefore, defining  $\Omega_1(n,N):= \mathbb{B}^c \cap \Omega_0(n,N)$,
 by (\ref{prob}), we obtain
\begin{align*}
    \Pro(\Omega_1(n,N)) \geq 1-2N e^{-n/4} - 2p \geq 1- 4 N e^{-n/4} .
\end{align*}

It remains to estimate volumes of corresponding polytopes for $\omega \in \Omega_1(n,N)$. They can be written
as $Q(Z_N(\omega))=\absconv\{w_1, \dots, w_N\}$ with $\|w_m \|\leq 2\lambda_m$ for every $m\leq N$. We first estimate
\begin{align*}
  A:&=   \sum_{m = 1}^N  e^{-\frac{\alpha^2}{8 \lambda_m^2}} \leq
  m_0 e^{-\frac{\alpha^2}{32 n}} + \sum_{m= m_0 + 1}^{m_1} e^{-\frac{\alpha^2 m}{8 C n k \log\left(\frac{1}{\varepsilon}\right)}}  + (N-m_1)  e^{-\frac{\alpha^2}{32\varepsilon^2 n}}   \\&
    \leq   m_0 e^{-\frac{\alpha^2}{32 n}} +\left(1-e^{-\frac{\alpha^2 }{8 C n k \log\left(\frac{1}{\varepsilon}\right)}}\right)^{-1}  e^{-\frac{\alpha^2 (m_0+1)}{8 C n k \log\left(\frac{1}{\varepsilon}\right)}}  +  Ne^{- \frac{\alpha^2}{32 \varepsilon^2 n }} \\&
     \leq   \left( m_0 + \max\left\{2, \frac{16 C n k \log\left(\frac{1}{\varepsilon}\right)}{\alpha^2 }\right\}\right) e^{-\frac{\alpha^2}{32 n}}  +  Ne^{- \frac{\alpha^2}{32 \varepsilon^2 n }} \\&
    =  \frac{C k \log (1/\eps)}{4}\, \left(1 +\max\left\{1, \frac{16  n }{\alpha^2 }\right\}\right)\, e^{-\frac{\alpha^2}{32 n}}  +  Ne^{- \frac{\alpha^2}{32 \varepsilon^2 n }},
\end{align*}
where we used that $e^{-x}\leq \max\{1-x/2, 1/2\}$ for $x>0$.
We choose
$$\alpha = 6\sqrt{n} \max\left\{ \sqrt{\log\Big( C \log \frac{1}{\varepsilon}\Big)}, \sqrt{\log\frac{N}{k}} \cdot \varepsilon\right\},$$
then  $\frac{2}{k} A \leq 2$. Furthermore, we choose
$$
 \varepsilon =\max\left\{ \frac{\sqrt{\log\log\log\frac{N}{k}}}{\sqrt{\log\frac{N}{k}}}, 2 \sqrt{\frac{k}{n}}\right\}
$$
(recall that $k\leq n/16\leq N/32$), then
$$
   \alpha \leq C_1 \max\left\{  \sqrt{n \, \log\log\log\frac{N}{k}} ,\,  \,  2 \sqrt{k\, \log\frac{N}{k}}\right\},
$$
where $C_1>0$ is an absolute constant. Applying  Lemma~\ref{lemma boundGnorm} for $Q(Z_N(\omega))$ (note that
$Q(Z_N(\omega))$ is $k$-dimensional) we obtain
$$
  |Q(Z_N(\omega))|^{1/k} \leq \frac{\sqrt{2\pi} e^3 \alpha}{k} \leq C_2
  \max\left\{ \frac{\sqrt{n}}{k} \, \sqrt{\log\log\log\frac{N}{k}} , \frac{\sqrt{\log\frac{N}{k}}}{ \sqrt{k}}\r\},
$$
where $C_2>0$ is an absolute constant.
This completes the proof.
\end{proof}

\section{Proof of the main theorem.}



We first  prove a series of lemmas.
Given  two $k$-dimensional subspaces of $\R^n$,  $E$ and $F$, we denote by $\SS(E, F)$
the set of all linear operators
$T:E \to F$ preserving the volume (the $k$-dimensional Lebesque measure).
If $E=Q_0 \R^n$ and $F=Q_1 \R^n$ for some $Q_0, Q_1 \in \PP^k(n)$ we simply write $\SS(Q_0, Q_1)$.

\begin{lemma}\label{lemavol projections}
Let $K \subset \R^n$ be a centrally symmetric convex body,
$Q_0, Q_1 \in \PP^k(n)$ be  fixed orthogonal projections of rank $k$, and $A>0$.
Let  $T_0 \in \SS(Q_0, Q_1)$ be a fixed linear operator.
Then
$$
 \Pro\{\omega \in \Omega\,\, |\, \,\norm{T_0 : X_{Q_0Z_N(\omega)} \to X_{Q_1 K}} \leq A\} \leq (A/\sqrt{2\pi})^{kN} |Q_1 K|^N.
$$
\end{lemma}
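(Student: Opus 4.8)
The plan is to reduce the statement to a bound on the probability that each Gaussian vertex of $Q_0Z_N(\omega)$, after applying $T_0$, lands inside a fixed convex body, and then to use independence. First I would record that
\[
Q_0 Z_N(\omega)=\absconv\{\sqrt n\, Q_0e_1,\dots,\sqrt n\, Q_0e_n,\, Q_0g_1(\omega),\dots,Q_0g_N(\omega)\},
\]
and that the condition $\norm{T_0:X_{Q_0Z_N(\omega)}\to X_{Q_1K}}\le A$ is equivalent to $T_0\big(Q_0Z_N(\omega)\big)\subset A\,Q_1K$, i.e.\ to the requirement that $T_0$ map every vertex of $Q_0Z_N(\omega)$ into $A\,Q_1K$ (here one uses that $A\,Q_1K$ is convex and symmetric and that $T_0$ is linear). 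In particular, on the event in question one has $T_0Q_0g_i(\omega)\in A\,Q_1K$ for every $i\le N$, so it suffices to estimate the probability of this larger event.

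Next I would identify the law of the vectors $T_0Q_0g_i$. Since $g_i$ is a standard Gaussian vector in $\R^n$, its orthogonal projection $Q_0g_i$ is a standard Gaussian vector in the $k$-dimensional Euclidean space $E:=Q_0\R^n$. Because $T_0\in\SS(Q_0,Q_1)$ preserves $k$-dimensional volume it is invertible and, with respect to orthonormal bases of $E$ and $F:=Q_1\R^n$, has $|\det T_0|=1$; hence $T_0Q_0g_i$ has density $y\mapsto(2\pi)^{-k/2}\exp(-\norm{T_0^{-1}y}^2/2)$ on $F$, which is bounded above by $(2\pi)^{-k/2}$. Integrating this bound over the set $A\,Q_1K\subset F$ and using $|A\,Q_1K|=A^k|Q_1K|$ gives, for each fixed $i\le N$,
\[
\Pro\{T_0Q_0g_i\in A\,Q_1K\}\ \le\ (2\pi)^{-k/2}\,|A\,Q_1K|\ =\ \Big(\frac{A}{\sqrt{2\pi}}\Big)^{k}|Q_1K|.
\]

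Finally, since $g_1,\dots,g_N$ are independent, the events $\{T_0Q_0g_i\in A\,Q_1K\}$, $i\le N$, are independent, and multiplying the estimates above yields
\[
\Pro\{\omega\in\Omega\,:\,\forall\, i\le N,\ T_0Q_0g_i(\omega)\in A\,Q_1K\}\ \le\ \Big(\frac{A}{\sqrt{2\pi}}\Big)^{kN}|Q_1K|^{N},
\]
which dominates the probability in the statement. The argument is routine; the only point that requires a little care is the identification of the push-forward density in the second step — namely that $T_0$ being volume preserving forces it to be invertible with unit Jacobian, so that the standard Gaussian density is merely reshaped, not rescaled, and therefore stays below its maximum value $(2\pi)^{-k/2}$.
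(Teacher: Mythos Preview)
Your proof is correct and follows essentially the same approach as the paper: reduce to the event that each $Q_0g_i$ (after applying $T_0$) lands in $A\,Q_1K$, bound the single-vector probability by $(2\pi)^{-k/2}|A\,Q_1K|$ using that $T_0$ has unit Jacobian, and use independence. The only cosmetic difference is that you push forward the Gaussian by $T_0$ and bound the resulting density, whereas the paper pulls back the set by $T_0^{-1}$ and bounds the Gaussian measure of $AT_0^{-1}(Q_1K)$ directly; these are equivalent.
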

\begin{proof}
Observe that
\begin{align*}
 T_0Q_0(Z_N(\omega)) \subset AQ_1 K &\Longleftrightarrow
 Q_0Z_N(\omega) \subset AT_0^{-1}(Q_1 K) \\&
 \Longleftrightarrow \forall i \leq N \, : \, \,
  Q_0 g_i(\omega) \in AT_0^{-1}(Q_1 K) .
\end{align*}
Note that for every $k$-dimensional convex body $L$ one has $\gamma_k(L) \leq (2\pi)^{-k/2}  |L|$.
Using this, the rotational invariance of the Gaussian measure, and the fact that $T_0$ preserves the
Lebesgue  measure in $Q_0 \R^n$, we observe that for every $i\leq N$,
$$
 \Pro\{\omega \in \Omega\,\, |\, \,Q_0 g_i(\omega) \in AT_0^{-1}(Q_1 K)\} \leq (2\pi)^{-k/2} |AT_0^{-1}(Q_1 K)| =
 (2\pi)^{-k/2} A^k |Q_1 K| .
$$
The result follows by the indepenence of $g_i$'s.
\end{proof}

\begin{lemma}\label{lemaprobQ0fijo}
There exists and absolute constant $C>0$ such that the following holds.
Let $k\leq n$,  $Cn\leq N\leq n e^{k}$, and $A>0$.
Let $K \subset \R^n$ be a centrally symmetric convex body and
$Q_0, Q_1 \in \PP^k(n)$ be  fixed orthogonal projections of rank $k$.
Then
\begin{align*} \Pro \{ \omega \in \Omega_0(n,N)\,\, |\,\, \exists T \in \SS(Q_0, Q_1) &  \,\,
\mbox{ such that } \,\,  \norm{T: X_{Q_0Z_N(\omega)} \to X_{Q_1 K}} \leq A \} \\
& \leq (5\sqrt{n})^{k^2}  A^{Nk}|Q_1 K|^N.
\end{align*}
\end{lemma}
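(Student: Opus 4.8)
The plan is to run an $\eps$-net argument over $\SS(Q_0,Q_1)$, feed each net element into Lemma~\ref{lemavol projections}, and conclude by the union bound. The difficulty is that $\SS(Q_0,Q_1)$ is not compact, so it cannot be discretised directly; the key observation that resolves this is that the event under consideration forces the operator $T$ into a \emph{bounded} and, crucially, $\omega$-\emph{independent} subset of the $k^2$-dimensional space $\mathcal L(Q_0\R^n,Q_1\R^n)$, which can then be handled by a volumetric covering estimate.

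First I would record the inclusion $B_2^n\subset\sqrt n\,B_1^n=\absconv\{\sqrt n e_1,\dots,\sqrt n e_n\}\subset Z_N(\omega)$, valid for every $\omega$; projecting, $B_2^k:=B_2^n\cap Q_0\R^n\subset Q_0Z_N(\omega)$ for every $\omega$. Hence $\norm{T:\ell_2^k\to X_{Q_1K}}\le\norm{T:X_{Q_0Z_N(\omega)}\to X_{Q_1K}}$ for all $T\in\SS(Q_0,Q_1)$ and all $\omega$, so whenever the event of the statement occurs the operator $T$ lies in the $\omega$-free set
$$
 \mathcal T:=\bigl\{\,T\in\SS(Q_0,Q_1)\ :\ \|T\|_*\le A\,\bigr\},\qquad \|T\|_*:=\norm{T:\ell_2^k\to X_{Q_1K}},
$$
and $\|\cdot\|_*$ is a genuine norm on $\mathcal L(Q_0\R^n,Q_1\R^n)$ (the only property of $Q_1K$ needed is that it is a $k$-dimensional convex body with the origin in its interior; in particular no position hypothesis on $K$ is required here).

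Next I would take $\mathcal N\subset\mathcal T$ to be a maximal $\eps$-separated subset in $\|\cdot\|_*$, with $\eps=A/(2\sqrt n)$; such a set is automatically an $\eps$-net of $\mathcal T$ in $\|\cdot\|_*$, it consists of volume-preserving operators, and it does not depend on $\omega$. Comparing the volumes, in $\mathcal L(Q_0\R^n,Q_1\R^n)\cong\R^{k^2}$, of the pairwise disjoint translates $T+(\eps/2)B_*$ for $T\in\mathcal N$ — all contained in $(A+\eps/2)B_*$, since $\mathcal N\subset\mathcal T\subset AB_*$ with $B_*$ the unit ball of $\|\cdot\|_*$ — one gets $|\mathcal N|\le(2A/\eps+1)^{k^2}=(4\sqrt n+1)^{k^2}\le(5\sqrt n)^{k^2}$. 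Then I would transfer the net to the operator norm appearing in the statement: for $\omega\in\Omega_0(n,N)$ one has $Q_0Z_N(\omega)\subset 2\sqrt n\,B_2^k$, hence $\norm{S:X_{Q_0Z_N(\omega)}\to X_{Q_1K}}\le 2\sqrt n\,\|S\|_*$ for every $S$. So if some $T\in\mathcal T$ witnesses the event and $T'\in\mathcal N$ satisfies $\|T-T'\|_*\le\eps$, then $\norm{T':X_{Q_0Z_N(\omega)}\to X_{Q_1K}}\le A+2\sqrt n\,\eps=2A$; thus the event of the statement is contained in $\bigcup_{T'\in\mathcal N}\{\,\norm{T':X_{Q_0Z_N}\to X_{Q_1K}}\le 2A\,\}$.

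Finally I would bound the probability by the union bound together with Lemma~\ref{lemavol projections} applied to each fixed $T'\in\mathcal N\subset\SS(Q_0,Q_1)$ with $2A$ in place of $A$:
$$
 \Pro(\,\cdot\,)\ \le\ |\mathcal N|\,\Bigl(\tfrac{2A}{\sqrt{2\pi}}\Bigr)^{kN}|Q_1K|^{N}\ \le\ (5\sqrt n)^{k^2}\Bigl(\tfrac{2}{\pi}\Bigr)^{kN/2}A^{kN}|Q_1K|^{N}\ \le\ (5\sqrt n)^{k^2}A^{kN}|Q_1K|^{N},
$$
the last step because $2/\pi<1$, which absorbs the harmless factor $2$ produced by the net. (The hypotheses $Cn\le N\le ne^k$ enter only through membership in $\Omega_0(n,N)$, which supplies the inclusion $Q_0Z_N(\omega)\subset 2\sqrt n\,B_2^k$.) The main obstacle is the first step: realising that the a priori bound $\|T\|_*\le A$ coming from $B_2^k\subset Q_0Z_N$ lets one replace the non-compact, $\omega$-dependent family of relevant operators by the compact, $\omega$-free set $\mathcal T=AB_*\cap\SS(Q_0,Q_1)$, after which everything is routine covering and union bounds.
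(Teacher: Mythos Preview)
Your proof is correct and follows essentially the same route as the paper: discretise $\SS(Q_0,Q_1)\cap\{\,\|T:\ell_2^k\to X_{Q_1K}\|\le A\,\}$ by a maximal $A/(2\sqrt n)$-separated set, bound its cardinality by the volumetric estimate $(5\sqrt n)^{k^2}$, transfer to the norm $\|\,\cdot:X_{Q_0Z_N(\omega)}\to X_{Q_1K}\|$ via $Q_0Z_N(\omega)\subset 2\sqrt n\,B_2^k$ on $\Omega_0(n,N)$, and finish with the union bound plus Lemma~\ref{lemavol projections}. The only (harmless) difference is that you obtain $B_2^n\subset Z_N(\omega)$ deterministically from the canonical vectors $\sqrt n\,e_i$, whereas the paper cites the inclusion on $\Omega_0(n,N)$ from Remark~\ref{om-zer}; you also make explicit the absorption of the factor $(2/\sqrt{2\pi})^{kN}<1$, which the paper leaves implicit.
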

\begin{proof}
Let $E:= \ell_2^n \cap Q_0\R^n$ and let  $U:=B_{\LL(E, X_{Q_1 K})}$ be the unit ball of $\LL(E, X_{Q_1 K})$. Denote
$$
  a:= \frac{A}{2  \sqrt{n }} .
$$
Let $\NN$ be a maximal  $a$-separated set in $A U \cap \SS(Q_0, Q_1)$
in the metric $\Vert \cdot  \Vert_{\LL(E, X_{Q_1 K})}$. By the maximality of
$\NN$, the set $\NN$ is an $a$-net for $A U \cap \SS(Q_0, Q_1)$
and moreover,  the following inclusion for the disjoint union holds,
$$
  \bigcup_{\eta \in \NN} \left(\eta + \frac{a }{2}\, U \r) \subset \left(A + \frac{a}{2}\r)\, U.
$$
Identifying the space with $\R^{k^2}$ and computing volumes we conclude that
$$\#\NN \leq \left(\frac{A+a/2}{a/2}\r)^{k^2}\leq  (5\sqrt{n})^{k^2}.$$

Take $\omega \in \Omega_0(n,N)$ such that there exists $T \in \SS(Q_0, Q_1)$ with
$$
   \norm{T : X_{Q_0Z_N(\omega)} \to X_{Q_1 K}} \leq A.
$$
Recall that by Remark~\ref{om-zer} we have on $\Omega_0(n,N)$,
$$
 B_2^n \subset Z_N(\omega)\subset 2\sqrt{n} B_2^n ,
$$
hence
 $T \in A U$. Since $\NN$ is an $a$-net for $A U \cap \SS(Q_0, Q_1)$ there is $S \in \NN$ such that
 $$\norm{S-T : E \to X_{Q_1 K}} \leq a.$$
Using that, $Z_N(\omega)\subset 2\sqrt{n}B_2^n$,
\begin{align*}
\norm{S : X_{Q_0 Z_N(\omega)} \to X_{Q_1 K}} &\leq \norm{S - T : X_{Q_0Z_N(\omega)} \to X_{Q_1 K}} + \norm{T : X_{Q_0Z_N(\omega)} \to X_{Q_1 K}}\\
&\leq 2\sqrt{n}\, \norm{S - T : E \to X_{Q_1 K}} + A \leq 2\sqrt{n} a + A= 2A.
\end{align*}
This shows
\begin{align*}
 \{ \omega \in \Omega_0(n,N) \,\, |\,\,  \exists T \in \SS(Q_0, Q_1)  \,\, \mbox{ such that } \,\,
 \norm{T: X_{Q_0Z_N(\omega)} \to X_{Q_1 K}} \leq A \}\\ \subset \bigcup_{S \in \NN} \{S\,\, |\,\,
 \norm{S: X_{Q_0Z_N(\omega)} \to X_{Q_1 K}} \leq 2A\}.
\end{align*}
Using the union bound and applying Lemma~\ref{lemavol projections}, we obtain the desired bound.
\end{proof}

Given bases $B = \{v_1, \dots, v_k \}$ and $B' = \{v_1', \dots, v_k'\}$ of  vector spaces $F$ and $F'$ and a vector $x \in F$ we denote by $(x)_B$ the coordinates of $x$ in the basis $B$ (similarly, $(y)_{B'}$ for $y\in F'$). That is, $(x)_B=(\alpha_1, \dots, \alpha_k)$ if $x=\sum_{i=1}^k \alpha_i v_i.$
Also for an operator $T:F \to F'$ we denote by $[T]_{B,B'}$ the matrix $(a_{i,j})_{1 \leq i,j \leq k}$ such that
$T(v_\ell)=\sum_{i=1}^k a_{i,\ell} v_i'$, for every $1 \leq \ell \leq k$ (i.e., the $\ell$-column of $[T]_{B,B'}$ is  $(Tv_\ell)_{B'}^t$).

\begin{lemma}\label{lemaprobQT}
Let $K \subset \R^n$ be a centrally symmetric convex body  in John's position. Then for every $\beta>0$ one has
\begin{align*}
\Pro \Bigg\{ \omega \in \Omega_0(n,N)\, |\, & \exists Q_0, Q_1 \in \PP^k(n)\,  \exists T \in \SS(Q_0, Q_1)  \, :\,\,
 \norm{T: X_{Q_0 Z_N(\omega)} \to X_{Q_1 K}} \leq \frac{\beta}{|Q_1 K|^{\frac{1}{k}}}\Bigg\} \\ &
\leq C^{kN} ( \sqrt{n}) ^{2 nk+k^2}  {\beta}^{kN},
\end{align*}
where $C>0$ is an absolute constant.
\end{lemma}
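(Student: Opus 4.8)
The plan is to combine Lemma~\ref{lemaprobQ0fijo} with a union bound over a net in the space of pairs of projections, using the net from Lemma~\ref{lemma net}, and then to pass from ``nearby'' projections to arbitrary projections via an approximation argument as in Lemma~\ref{lema medidas} and in the proof of Theorem~\ref{volume estimation}. First I would fix $\eps>0$ (to be chosen of the order of a negative power of $n$) and pick, using Lemma~\ref{lemma net}, an $\eps$-net $\Pi\subset\PP^k(n)$ with $\#\Pi\leq (C_0/\eps)^{nk}$. For each pair $(Q_0,Q_1)\in\Pi\times\Pi$ I apply Lemma~\ref{lemaprobQ0fijo} with $A:=A(Q_0,Q_1)$ a suitable multiple of $\beta/|Q_1K|^{1/k}$; the probability that some volume-preserving $T$ achieves $\norm{T:X_{Q_0Z_N}\to X_{Q_1K}}\le A$ is at most $(5\sqrt n)^{k^2}A^{Nk}|Q_1K|^N = (5\sqrt n)^{k^2}(\text{const})^{Nk}\beta^{Nk}$, and the $|Q_1K|^N$ factors cancel exactly against $A^{Nk}$. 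A union bound over $\Pi\times\Pi$ multiplies this by $(C_0/\eps)^{2nk}$, and choosing $\eps$ a fixed negative power of $n$ turns $(C_0/\eps)^{2nk}$ into $(\sqrt n)^{2nk}$ up to an absolute constant absorbed into $C^{kN}$ (here one uses $\eps^{-1}\le n^{a}$ so that $(C_0/\eps)^{2nk}\le (C_0 n^a)^{2nk}\le (\sqrt n)^{O(nk)}$, and the exponent is tidied to $2nk$ by enlarging $C$). This handles all pairs in the net.

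Next I would remove the restriction to the net. Given arbitrary $Q_0,Q_1\in\PP^k(n)$, choose $\widetilde Q_0,\widetilde Q_1\in\Pi$ with $\norm{Q_i-\widetilde Q_i}<\eps$. Since $K$ is in John's position, $B_2^n\subset K\subset\sqrt n\,B_2^n$, so by the computation in Lemma~\ref{lema medidas} (with $\eps\le\tfrac1{2\sqrt n}$) the bodies $Q_iK$ and $\widetilde Q_iK$ are within a factor $2$ of each other in the sense $\tfrac12|\widetilde Q_iK|^{1/k}\le|Q_iK|^{1/k}\le 2|\widetilde Q_iK|^{1/k}$, and moreover $\widetilde Q_i K\subset 2 Q_i \widetilde Q_i K$, i.e. the identification of the two $k$-dimensional subspaces distorts the relevant Minkowski functionals by at most an absolute constant. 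Likewise, on $\Omega_0(n,N)$ we have $Z_N(\omega)\subset 2\sqrt n B_2^n$, so $\norm{Q_0-\widetilde Q_0}<\eps$ changes $\norm{\cdot:X_{Q_0Z_N}\to\cdot}$ by at most an additive $2\eps\sqrt n\le 1$ relative error (after composing with the projection identification). Consequently, if there exists $T\in\SS(Q_0,Q_1)$ with $\norm{T:X_{Q_0Z_N(\omega)}\to X_{Q_1K}}\le\beta/|Q_1K|^{1/k}$, then there exists $\widetilde T\in\SS(\widetilde Q_0,\widetilde Q_1)$ (namely $T$ precomposed and postcomposed with the natural volume-preserving identifications, rescaled to have the right determinant) with $\norm{\widetilde T:X_{\widetilde Q_0 Z_N(\omega)}\to X_{\widetilde Q_1K}}\le C'\beta/|\widetilde Q_1K|^{1/k}$ for an absolute constant $C'$. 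Hence the ``bad'' event for arbitrary projections is contained in the union of the bad events over the net with $\beta$ replaced by $C'\beta$, and the bound from the previous paragraph (with $\beta\leftarrow C'\beta$) applies, the factor $(C')^{kN}$ being absorbed into $C^{kN}$.

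The main obstacle I anticipate is the bookkeeping in the second step: one must be careful that the operator $\widetilde T$ genuinely lies in $\SS(\widetilde Q_0,\widetilde Q_1)$ (volume preservation), which forces a rescaling by $|\det(\text{identification})|^{1/k}$, and one must check that this rescaling is also controlled by an absolute constant — this again follows from $B_2^n\subset K$, $K\subset\sqrt n B_2^n$ together with $\norm{Q_i-\widetilde Q_i}<\eps\le\tfrac1{2\sqrt n}$, since then the restriction of $Q_i$ to $\widetilde Q_i\R^n$ is an isomorphism onto $Q_i\R^n$ whose singular values lie in $[\tfrac12,1]$, so its determinant is between $2^{-k}$ and $1$. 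Once this is in place, collecting the exponents gives exactly $C^{kN}(\sqrt n)^{2nk+k^2}\beta^{kN}$. Everything else — the union bound, the cancellation of $|Q_1K|^N$, and the choice of $\eps$ as a fixed power of $n$ — is routine.
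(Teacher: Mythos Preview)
Your proposal is correct and follows essentially the same route as the paper: fix $\eps=\tfrac{1}{2\sqrt n}$, take an $\eps$-net $\Pi$ in $\PP^k(n)$ via Lemma~\ref{lemma net}, apply Lemma~\ref{lemaprobQ0fijo} to each pair $(Q_0',Q_1')\in\Pi\times\Pi$ with $A=C'\beta/|Q_1'K|^{1/k}$ so that the $|Q_1'K|^N$ factor cancels, take the union bound, and then reduce arbitrary projections to net points by an approximation argument using the John position of $K$ and the inclusion $B_2^n\subset Z_N(\omega)\subset 2\sqrt n\,B_2^n$ on $\Omega_0(n,N)$. The only cosmetic difference is in how the approximating operator is built: the paper fixes orthonormal bases $B,B'$ of $Q_0\R^n,Q_1\R^n$, pushes them forward to bases $B_0=Q_0'B,\ B_1=Q_1'B'$ of the net subspaces, and defines $S$ by $[S]_{B_0,B_1}=[T]_{B,B'}$, whereas you describe the same map as ``$T$ composed with the natural identifications and rescaled to unit determinant'' --- these are the same construction, and your remark that the singular values of $Q_i|_{\widetilde Q_i\R^n}$ lie in $[\tfrac12,1]$ is exactly what controls the rescaling.
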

\begin{proof}
By  Lemma~\ref{lemma net} there is  a $\frac{1}{2\sqrt{n}}$-net, say $\Pi$, for $\PP^k(n)$ of cardinality $\#\Pi \leq (C_0\sqrt{n})^{nk}$.
By Lemma~\ref{lemaprobQ0fijo} and the union bound, it is enough to show that
\begin{align*}
 \Bigg\{& \omega \in \Omega_0(n,N)\, |\,  \exists Q_0, Q_1 \in \PP^k(n),\, \;  \exists T \in \SS(Q_0, Q_1)  \, :\,\,
 \norm{T: X_{Q_0 Z_N(\omega)} \to X_{Q_1 K}} \leq \frac{\beta}{|Q_1 K|^{\frac{1}{k}}}\Bigg\} \\ &
\subset \bigcup_{Q_0', Q_1' \in \Pi} \left\{ \omega \in \Omega_0(n,N) \, | \,  \exists S  \in \SS(Q_0', Q_1') \, :
\,\, \norm{S: X_{Q_0'Z_N(\omega)} \to X_{Q_1'K}} \leq C' \frac{\beta}{|Q_1' K|^{\frac{1}{k}}}\right\}.
\end{align*}

Let $\omega \in \Omega_0(n,N)$ be such that there are $Q \in \PP^k(n)$ and $T \in \SS(Q_0, Q_1)$ with
\begin{align}\label{ecuacion00}
 \norm{T: X_{Q_0 Z_N(\omega)} \to X_{Q_1 K}} \leq \frac{\beta}{|Q_1K|^{\frac{1}{k}}}.
\end{align}
Take $Q_0', Q_1' \in \Pi$ such that $\norm{Q_i - Q_i'} \leq \frac{1}{2\sqrt{n}}$, $i=1,2$.
Fix orthonormal bases
$$B= \{v_1,\dots,v_k\} \quad \quad \mbox{ and }\quad \quad B'= \{v_1',\dots,v_k'\}$$
 of $Q_0\R^n$ and $Q_1\R^n$ respectively.
It is easy to see that the collections
$$
  B_0 = \{Q_0'v_1,\dots,Q_0'v_k\}\quad \quad \mbox{ and }\quad \quad B_1 = \{Q_1'v_1',\dots,Q_1'v_k'\}
$$
are bases of $Q_0'\R^n$ and  $Q_1'\R^n$ respectively. Let $S : Q_0'\R^n \to Q_1'\R^n$ be such that
$$[S]_{B_0, B_1} = [T]_{B, B'},$$
in particular,
 $S \in \mathcal S(Q_0', Q_1').$

\smallskip

It is enough to show that
\begin{align*}
\norm{S: X_{Q_0' Z_N(\omega)} \to X_{Q_1' K}} \leq \frac{ C'\, \beta}{|Q_1' K|^{\frac{1}{k}}},
\end{align*}
which by  Lemma~\ref{lema medidas} reduces to
\begin{align*}
\norm{S: X_{Q_0' Z_N(\omega)} \to X_{Q_1' K}} \leq \frac{ C'\, \beta}{2 |Q_1 K|^{\frac{1}{k}}}.
\end{align*}
Take $x \in Z_N(\omega)$ and note
$$
  SQ_0' x= \underbrace{SQ_0' (Q_0' x - Q_0 x)}_{(1)} + \underbrace{SQ_0' Q_0 x}_{(2)}.
$$
We check that both terms $(1)$ and $(2)$ are contained in a multiple of $\frac{\beta}{|Q_1 K|^{\frac{1}{k}}} Q_1' K$.

\smallskip

We start with the second term,  $SQ_0' Q_0 x$. Write $Q_0 x = \sum \alpha_iv_i$, so $Q_0' Q_0 x = \sum \alpha_iQ_0' v_i$.
We have,
\begin{align*}
(SQ_0' Q_0 x)_{{B_1}}^t &= [T]_{B, B'} (Q_0' Q_0 x)_{{B_0}}^t\\
&= [T]_{B, B'} (Q_0 x)_B^t\\
&=(TQ_0 x)_{B'}^t.
\end{align*}
Therefore $S Q_0' Q_0 x = Q_1' T Q_0 x$. Since $x \in Z_N(\omega)$, by (\ref{ecuacion00}) we have
$T Q_0 x \in \frac{\beta}{|Q_1 K|^{\frac{1}{k}}} Q_1 K$, hence
$$
   Q_1' T Q_0 x \in \frac{\beta}{|Q_1 K|^{\frac{1}{k}}} Q_1' Q_1 K.
$$
Since $K$ is in John's position,
 $B_2^n \subset K \subset \sqrt{n}B_2^n$. Using $\norm{Q_1 - Q_1'} \leq \frac{1}{2 \sqrt{n}}$, we obtain
\begin{align}\label{QQK}
Q_1' Q_1 K \subset& \, Q_1' K + Q_1' ((Q_1 - Q_1')K)\\ \nonumber
\subset& \, Q_1' K + Q_1' ((Q_1-Q_1')\sqrt{n} B_2^n)\\  \nonumber
\subset& \, Q_1' K + Q_1' B_2^n \\  \nonumber
\subset & \, 2Q_1' K.
\end{align}
This implies
$$
S Q_0' Q_0 x =  Q_1' T Q_0 x  \in  \frac{\beta}{|Q_1 K|^{\frac{1}{k}}} Q_1' Q_1 K
\subset \frac{2\beta}{|Q_1 K|^{\frac{1}{k}}} Q_1'  K ,
$$
proving the inclusion for term  $(2)$.

\smallskip

Next we deal with the first term, $SQ_0'(Q_0' x - Q_0 x)$.
Recall that  $Z_N(\omega) \subset 2 \sqrt{n}B_2^n$ on $\Omega_0(n,N)$ and $\norm{Q_0 - Q_0'} \leq \frac{1}{2 \sqrt{n}}$.
Therefore $(Q_0' - Q_0)x \in  B_2^n$ and then $Q_0' (Q_0' - Q_0)x \in B_2^k$.
Thus, it is enough to show  that
$$
  \norm{S : X_{Q_0' B_2^n } \to X_{Q_1' K}}\leq C''\frac{\beta}{|Q_1 K|^{\frac{1}{k}}}.
$$

Take $ y \in Q_0' \R^n$ with $\norm{y}_2 = 1$. Write $(y)_{{B_0}} = (\beta_1,\dots,\beta_k)$.
Then,
\begin{align}\label{gam}
(\gamma_1,\dots,\gamma_k) := (Sy)_{{B_1}} =& [T]_{B, B'} (\beta_1,\dots,\beta_k)^t
= (T(\sum \beta_iv_i))_{B'}.
\end{align}
Notice that
\begin{align*}
\norm{\sum \beta_i v_i}_2 \leq& \norm{\sum \beta_i Q_0 v_i - \sum \beta_iQ_0' v_i}_2 + \norm{\sum \beta_i Q_0' v_i}_2 \\
\leq& \frac{1}{2 \sqrt{n}} \norm{\sum \beta_i v_i}_2 + 1,
\end{align*}
which implies
\begin{align*}
\norm{\sum \beta_iv_i}_2 &\leq \frac{1}{1-\frac{1}{2\sqrt{n}}} \leq 2.
\end{align*}
Since by  (\ref{om-inc}) we have $B_2^n\subset Z_N(\omega)$ on $\Omega_0(n,N)$, using
 (\ref{ecuacion00}), we observe
\begin{align}\label{tbeta}
 T (\sum \beta_i v_i) &\in \frac{2\beta}{ |Q_1 K|^{\frac{1}{k}}}Q_1 K.
\end{align}
On the other hand, by (\ref{gam}),
\begin{align*}
Sy =& \sum \gamma_i Q_1' v_i' = Q_1' (\sum \gamma_i v_i') = Q_1' T(\sum \beta_i v_i).
\end{align*}
Therefore, using (\ref{tbeta}) and (\ref{QQK}),
$$
  Sy \in \frac{2\beta}{ |Q_1 K|^{\frac{1}{k}}}Q_1' Q_1 K \subset \frac{4\beta}{ |Q_1 K|^{\frac{1}{k}}}Q_1 K.
$$
This completes the proof.
\end{proof}

We are now ready to prove our main result.

\begin{proof}[Proof of Theorem \ref{main thm}. ]
First note that it is enough to consider orthogonal projections only.
Indeed, let $P$ be a projection of rank $k$ and $Q$ be the orthogonal
projection with the same kernel as $P$. Then $QP =Q$, hence $Q(PK)=QK$ and
 $\vr(PK, W)=\vr(QK, W)$ for every convex body $W$.

Furthermore, by (\ref{RSvr}) for every $Q_0, Q_1 \in \PP^k(n)$ we have
$$
  \vr(Q_0(K-K), Q_1 Z) \leq \vr(Q_0(K-K),Q_0 K) \vr(Q_0 K, Q_1 Z) \leq 4 \vr(Q_0 K, Q_1 Z).
$$
Thus it is enough to estimate from below  $\vr(Q_0 (K-K), Q_1 Z)$, in other words without loss of generality, we may assume
 that $K$ is centrally symmetric. Since volume ratio is an affine invariant, we may also assume that
$K$ is in John's position.

Let $\beta >0$ and $n<N\leq n \log n$.  Let $\Omega_1(n,N)$ be the set  given by Theorem~\ref{volume estimation} (note that for $k\leq \sqrt{n}$
the result is trivial, so we may assume that  $k\geq \sqrt{n}$ and thus the assumption on $N$ in  Theorem~\ref{volume estimation}
is satisfied).
 Consider  the event
\begin{align*}
\mathcal{E}_\beta:= \Big\{ \exists Q_0, Q_1 \in \PP^k(n), & \, \;\exists T \in \mathcal S (Q_0, Q_1)\, :\,\,  \norm{T: X_{Q_0 Z_N(\omega)} \to X_{Q_1 K}} \leq \frac{\beta}{|Q_1 K|^{\frac{1}{k}}}\Big\}.
\end{align*}
Since $\Omega_1(n,N) \subset \Omega_0(n,N)$,  Lemma \ref{lemaprobQT} yields  that for some absolute constant $C>0$,
\begin{align*}
\nonumber \Pro \left( \mathcal{E}_\beta \bigcap \Omega_1(n,N)\right)
 \leq  C^{kN} ( \sqrt{n}) ^{2nk+k^2}  {\beta}^{kN} \leq (C\beta) ^{kN} \, n^{2nk}  .
\end{align*}
Choose  $N = n\log(n)$ and $\beta= C^{-1} e^{-3}$. Then, using Theorem~\ref{volume estimation} we obtain
$$
 \Pro \left( \mathcal{E}_\beta \right) \leq e^{-Nk} + \Pro \left( \Omega_1(n,N)\right) \leq e^{-Nk} +
 4 N e^{-n/4}\leq 5 n \log (n) \, e^{-n/4} <1.
$$
Thus there is $\omega \in \Omega_1(n,n\log(n))$ such that for every $Q_0, Q_1 \in \PP^k(n)$ and
$T \in \mathcal S (Q_0, Q_1)$,
$$
  \norm{T: X_{Q_0 Z_N(\omega)} \to X_{Q_1 K}} \geq \frac{\beta}{|Q_1 K|^{\frac{1}{k}}}.
$$
Using Fact~\ref{propiedades elementales} \eqref{prop vr norma} and Theorem~\ref{volume estimation}, we conclude that
\begin{align*}
\vr(Q_1 K, Q_0 Z) &\geq \frac{|Q_1 K|^\frac{1}{k}}{|Q_0 Z|^\frac{1}{k}}\,\, \frac{\beta}{|Q_1 K|^{\frac{1}{k}}}\\
\geq &  C' \beta \,
\min\left\{\frac{k}{\sqrt{n} \sqrt{\log \log \log \left(\frac{N}{k}\right)}}, \,
  \frac{\sqrt{k}}{\sqrt{\log(\frac{N}{k})}}\r\}
\end{align*}
which proves the desired result.
\end{proof}

Finally, for the sake of completeness, we prove Corollary \ref{dual thm}.

\begin{proof}[Proof of Corollary \ref{dual thm}]

Let $E, F$ be $k$-dimensional  subspaces  $\R^n$.
Applying Theorem~\ref{main thm} and Fact~\ref{propiedades elementales} (2) for $K^\circ$,
there is a centrally symmetric body $W$ such that
\begin{align*}
\min\left\{\frac{  k}{  \sqrt{n}} \cdot \sqrt{\frac{1}{\log \log \log(\frac{n\log(n)}{k})}}, \,
  \frac{\sqrt{k}}{\sqrt{\log(\frac{n\log(n)}{k})}}\right\} & \lesssim \vr\left(P_E K^\circ,P_F W\right) \\
   &\sim \vr\left((P_F W)^\circ, (P_E K^\circ)^\circ\right)\\
   &= \vr\left(F \cap W^\circ,E \cap K\right),
\end{align*}
where we used that $(P_E K^\circ)^\circ = E \cap K$ and $(P_F W)^\circ = E \cap W^\circ$.
This completes the proof.
\end{proof}

\section{Sharpness.}

We will use the following Rudelson's result proved in \cite{rudelson2004extremal} (see the first page of
the Section~4 in that paper, p. 1077).

\begin{theorem}\label{Rudel}
  Let $1\leq k\leq n/16$. Let $L\subset \R^n$ be a centrally symmetric convex  body.
  Then there are a parameter
  $t=t(L)$ and a linear operator $T:\R^n\to \R^n$ of rank  $k$ such that
  $$
    c\left(B_1^k + \frac{t}{\sqrt{n} \, \log n} B_2^k \right) \subset TL \subset
    C\left(t B_1^k + \sqrt{\frac{k}{n} } B_2^k \right)\subset
    C\left(t  +\frac{k}{\sqrt{n} } \right)B_1^k ,
  $$
  where $C>c>0$ are absolute constants.
\end{theorem}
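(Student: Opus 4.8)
Theorem~\ref{Rudel} is Rudelson's result from \cite{rudelson2004extremal}; below I outline the shape of a proof, which rests on three standard tools of asymptotic convex geometry: a good position for $L$ with controlled $MM^{*}$, the low-$M^{*}$ estimate for random subspaces, and a Gluskin-type counting argument over the set $\PP^{k}(n)$ of rank-$k$ projections. The plan is as follows. First put $L$ in the $\ell$-position by a linear map, so that Pisier's inequality gives $M(L)M^{*}(L)\le C\log n$, where $M(L)=\int_{S^{n-1}}\|\theta\|_{L}\,d\sigma(\theta)$ and $M^{*}(L)=\int_{S^{n-1}}h_{L}(\theta)\,d\sigma(\theta)$; this is the only place where the $\log n$ of the statement enters, and one rescales so that $M(L)=1$. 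The parameter $t=t(L)$ is then extracted as (a constant multiple of) the least value for which a random $k$-dimensional projection of $L$ lies inside $C\bigl(tB_{1}^{k}+\sqrt{k/n}\,B_{2}^{k}\bigr)$; after the normalization, $t$ turns out to lie roughly between $\sqrt{k}$ and $\sqrt{n}\log n$. Passing to polars is clarifying: the two inclusions to be proved say precisely that $\operatorname{range}(T^{*})$ is a $k$-dimensional subspace of $X_{L^{\circ}}$ that is $O(t)$-isomorphic to $\ell_{\infty}^{k}$, up to the two prescribed Euclidean truncations — equivalently, $X_{L}$ has a $k$-dimensional quotient that is $O(t)$-isomorphic to $\ell_{1}^{k}$.

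The Euclidean content of both inclusions is handled by the low-$M^{*}$ estimate (Milman; Pajor--Tomczak-Jaegermann). Since $k\le n/16$, a random $k$-dimensional subspace $E$ satisfies $\diam(L^{\circ}\cap E)\lesssim M^{*}(L^{\circ})=M(L)=1$, hence, dualizing, $P_{E}L\supseteq cB_{2}^{E}$; the operator $T$ we ultimately construct is \emph{not} a random projection — it must also satisfy the outer bound, which a random projection of, say, $B_{2}^{n}$ cannot — and the thin inner ball $\tfrac{t}{\sqrt n\log n}B_{2}^{k}$ is what remains of $cB_{2}^{E}$ after $T$ is distorted so as to also respect the outer inclusion, the loss being the price of the $\ell$-position. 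The outer Euclidean term $\sqrt{k/n}\,B_{2}^{k}$ is the residual scale attached to the near-$\ell_{1}^{k}$ quotient once $T$ has been chosen. Finally, the last containment $C(tB_{1}^{k}+\sqrt{k/n}\,B_{2}^{k})\subseteq C(t+k/\sqrt n)B_{1}^{k}$ is immediate from $B_{2}^{k}\subseteq\sqrt{k}\,B_{1}^{k}$.

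The real work is the cross-polytope part: producing a rank-$k$ operator $T$ with $B_{1}^{k}\subseteq TL$ (equivalently, a box-shaped $k$-dimensional section of $L^{\circ}$ of scale $\sim 1/t$). Here I would argue by a dichotomy in the spirit of Rudelson: either a random $k$-dimensional projection of $L$ already lies inside $C\sqrt{k/n}\,B_{2}^{k}$, in which case $t$ can be taken of order $\sqrt k$ and there is essentially nothing to manufacture, or $L^{\circ}$ contains a large family of vectors that are ``long'' in the $L^{\circ}$-norm, from which one greedily selects $k$ of them, say $x_{1},\dots,x_{k}\in L$, linearly independent and well separated in $X_{L^{\circ}}$, and chooses $T$ with $Tx_{i}=e_{i}$, so that $B_{1}^{k}=T\,\absconv\{x_{1},\dots,x_{k}\}\subseteq TL$ while $T$ simultaneously keeps $TL$ inside $C\bigl(tB_{1}^{k}+\sqrt{k/n}\,B_{2}^{k}\bigr)$. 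The main obstacle is exactly this coupling: the Euclidean inclusions are ``with high probability over a random subspace'' statements, whereas the cross-polytope inclusions demand a carefully \emph{structured} choice of directions, and one needs a single operator $T$ (and a single value of $t$) realizing all four inclusions at once. Making a probabilistic or greedy selection survive this requires a union bound over an $\varepsilon$-net in $\PP^{k}(n)$ of cardinality $(C/\varepsilon)^{nk}$ balanced against the small failure probability of a fixed projection — precisely the mechanism of Lemma~\ref{lemma net} and the proof of Theorem~\ref{volume estimation} above — and tracking the logarithmic losses from the $\ell$-position through this balancing is what ultimately produces the single $\log n$ in the estimate. No individual ingredient is hard; fitting them together with the correct exponents is.
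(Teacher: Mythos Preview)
The paper does not prove Theorem~\ref{Rudel}: it is quoted verbatim as ``Rudelson's result proved in \cite{rudelson2004extremal} (see the first page of the Section~4 in that paper, p.~1077)'' and used as a black box in the proof of Corollary~\ref{sharp}. There is therefore no ``paper's own proof'' to compare your attempt against; the authors simply cite the statement and move on.

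What you have written is a plausible high-level reconstruction of the ingredients in Rudelson's original argument --- the $\ell$-position and Pisier's $MM^{*}\lesssim\log n$, the low-$M^{*}$ estimate, and a net/counting mechanism --- but it is a sketch rather than a proof, and you say as much yourself (``fitting them together with the correct exponents is'' the hard part). In particular, the central ``dichotomy'' step you describe --- either a random projection already lands in $C\sqrt{k/n}\,B_2^k$, or one can greedily extract $k$ well-separated directions and force both the inner $B_1^k$ and the outer $tB_1^k+\sqrt{k/n}\,B_2^k$ inclusions simultaneously --- is exactly the delicate coupling that Rudelson's paper spends several pages on, and your outline does not supply the mechanism that makes it work. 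So as a pointer to the literature your answer is fine and matches what the present paper does; as a proof it is incomplete, but no more is required here since the authors themselves do not attempt one.
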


As a consequence of Rudelson's theorem we obtain the following bound.

\begin{corollary}\label{sharp}
  Let $1\leq k\leq n$. Let $L\subset \R^n$ be a  convex  body.
  Then there is  a $k$-dimensional projection $Q$ such that
  $$
    \vr(B_1^k, QL) \leq C \max\left\{ \frac{k}{\sqrt{n}},\,  \sqrt{\frac{n}{k}} \log n\right\},
  $$
   where $C>0$ is an absolute constant.
\end{corollary}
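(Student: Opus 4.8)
The plan is to deduce the estimate from Rudelson's structural result, Theorem~\ref{Rudel}, after a standard symmetrization and after disposing cheaply of the large-$k$ range. First I would reduce to the case where $L$ is centrally symmetric: for any rank-$k$ orthogonal projection $Q$ one has $Q(L-L)=QL-QL$, so by Rogers--Shephard (Remark~\ref{rem-rs} and Fact~\ref{propiedades elementales}(4)) $\vr(B_1^k,QL)\le \vr(B_1^k,Q(L-L))\cdot\vr(Q(L-L),QL)\le 4\,\vr(B_1^k,Q(L-L))$, and $L-L$ is symmetric. I would also handle the range $k\ge n/16$ directly: for any $k$-dimensional subspace $E$, John's theorem applied to the symmetric body $P_EL\subset E\cong\R^k$ lets one map its John ellipsoid onto the inscribed ball $k^{-1/2}B_2^k\subset B_1^k$ and compare volumes, giving $\vr(B_1^k,P_EL)\lesssim k\,(|B_1^k|/|B_2^k|)^{1/k}\sim \sqrt k$; and when $k\ge n/16$ one has $\sqrt k\le\sqrt n\le 16k/\sqrt n$, which is within the claimed bound.

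So assume $L$ is symmetric and $k<n/16$, and apply Theorem~\ref{Rudel} to obtain a parameter $t=t(L)>0$ and a rank-$k$ operator $T$ with $c\bigl(B_1^k+\tfrac{t}{\sqrt n\log n}B_2^k\bigr)\subset TL\subset C\bigl(tB_1^k+\sqrt{k/n}\,B_2^k\bigr)$. Writing $T=AQ$, where $Q$ is the orthogonal projection onto $(\ker T)^\perp$ (rank $k$) and $A=T|_{(\ker T)^\perp}$ is a linear isomorphism, one gets $TL=A(QL)$, hence $\vr(B_1^k,QL)=\vr(B_1^k,TL)$ by the affine invariance of the volume ratio in its second argument (Remark~\ref{rem-rs}). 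It then remains to bound $\vr(B_1^k,TL)$ from the two inclusions, using $B_2^k\subset\sqrt k\,B_1^k$, which upgrades the upper inclusion to $TL\subset C(t+k/\sqrt n)B_1^k$ while the lower one gives $TL\supset cB_1^k$ and $TL\supset \tfrac{ct}{\sqrt n\log n}B_2^k$.

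The key point is a dichotomy on the size of $t$. If $t\le k/\sqrt n$, then $cB_1^k\subset TL\subset 2C(k/\sqrt n)B_1^k$, so a scalar multiple of the identity already yields $\vr(B_1^k,TL)\lesssim k/\sqrt n$. If $t>k/\sqrt n$, then $TL\subset 2CtB_1^k$ and $TL\supset \tfrac{ct}{\sqrt n\log n}B_2^k$, so scaling by $(2Ct)^{-1}$ puts $TL$ inside $B_1^k$ while retaining a Euclidean ball of radius $\sim (\sqrt n\log n)^{-1}$; comparing volumes gives $\vr(B_1^k,TL)\lesssim \sqrt n\log n\,(|B_1^k|/|B_2^k|)^{1/k}\sim \sqrt{n/k}\,\log n$. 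Combining the two subcases with the symmetrization step and the $k\ge n/16$ case produces the stated bound. I do not expect a serious obstacle; the two points requiring a little care are (i) the factorization $TL=A(QL)$, which converts a statement about the non-orthogonal rank-$k$ map $T$ into one about an orthogonal projection $Q$, and (ii) the elementary but slightly lossy fact that $(|B_1^k|/|B_2^k|)^{1/k}\sim k^{-1/2}$ (equivalently, that $B_1^k$ has bounded volume ratio), which is precisely what yields the $\sqrt{n/k}$ and the logarithmic factor rather than $\sqrt n$.
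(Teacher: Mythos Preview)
Your proposal is correct and follows essentially the same route as the paper: reduce to symmetric $L$ via Rogers--Shephard, dispose of large $k$ by the trivial $\sqrt{k}$ bound, then apply Rudelson's Theorem~\ref{Rudel} and split into the two cases $t\le k/\sqrt{n}$ and $t>k/\sqrt{n}$, finally converting the rank-$k$ operator $T$ into an orthogonal projection via the factorization $T=AQ$. The only cosmetic differences are that the paper invokes Khrabrov's inequality \eqref{GH-vr} for the large-$k$ case (you rederive it from John's theorem) and postpones the $T\mapsto Q$ reduction to the end.
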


Before providing a proof of the previous corollary we make state two important remarks.

\begin{remark}\label{rem-l-1}
Recall that by (\ref{GH-vr}) and Remark~\ref{rem-rs}, for every convex body $L\subset \R^n$ and
every projection $Q$ of rank $k$,
\begin{align} \label{pvr-b-1}
\vr(B_1^k, QL) &\leq  \vr(B_1^k, QL-QL) \vr(QL-QL, QL)  \leq 4 \sqrt{{ek}}.
\end{align}
Thus
Corollary~\ref{sharp} implies that for every  convex  body $L\subset \R^n$
 there exists  a $k$-dimensional projection $Q$ such that
$$
      \vr(B_1^k, QL) \leq C
 \begin{cases}
    \frac{k}{\sqrt{n}}  & \text{if } k\geq n^{\nicefrac{2}{3}}  (\log n)^{\nicefrac{2}{3}}\\
    \sqrt{\frac{n}{k}} \log n &\text{if } \sqrt{n} \log n< k\leq n^{\nicefrac{2}{3}}  (\log n)^{\nicefrac{2}{3}}\\
    \sqrt{k} &\text{if }  k \leq  \sqrt{n} \log n.
\end{cases}
$$
\end{remark}

\begin{remark}
 Clearly, $B_1^k$ can be realized as a (coordinate) projection of $K=B_1^n$.
 Thus Corollary~\ref{sharp} shows sharpness of Theorem~\ref{main thm} (up to
 logarithmic factors) in the regime $k\geq n^{2/3}$.
 Note that Rudelson's bound (\ref{Rudbou}) has the same phase transition $k\sim n^{2/3}$.
\end{remark}

\begin{proof}[Proof of Corollary~\ref{sharp}.]
Clearly we may assume that $k\leq n/16$ (otherwise we may use (\ref{pvr-b-1})).

Using again Remark~\ref{rem-rs} implying that for
every projection $Q$ of rank $k$,
$$\vr(B_1^k, QL) \leq  \vr(B_1^k, QL-QL) \vr(QL-QL, QL) \leq 4 \vr(B_1^k, QL-QL),$$
without lost of generality we assume that $L$ is centrally symmetric.

Let $T$ be a projection given by Theorem~\ref{Rudel}.
We consider two cases. First assume that $t(L)\leq k/\sqrt{n}$.
In this case
$$
  c\,   B_1^k   \subset   TL \subset  2C\, \frac{k}{\sqrt{n}}  B_1^k.
$$
  This implies
$$
   \vr(B_1^k, TL) \leq \frac{2C\,k}{c \sqrt{n}} .
$$
  The second case is $t(L)> k/\sqrt{n}$. Using again $B_2^k\subset \sqrt{k} B_1^k$,
in this case we have
   $$
     \frac{c \, t}{\sqrt{n} \, \log n} B_2^n  \subset TL \subset
    2 t C B_1^k  .
  $$
  This implies
$$
   \vr(B_1^k, TL) \leq  \frac{2C\,\sqrt{n} \, \log n}{c }
   \left(\frac{|B_1^k|}{|B_2^k|}\right)^{1/k}\leq \frac{2C\,\sqrt{n} \, \log n}{c \sqrt{k}} .
$$
Finally, note that, similarly to the beginning of the proof of Theorem~\ref{main thm},
any image of a convex body under a linear operator of rank $k$ is
on the Banach--Mazur distance 1 to an image of the body under a projection having the same kernel.
Since volume ratio is an affine invariant,  this completes the proof.
\end{proof}

\section{Concluding remarks.}

In fact our results can be interpreted in terms of the following parameter
of convex bodies.

Let $1\leq k\leq n$, for a given convex body $K\subset \R^n$  define its
projection $k$-volume ratio as

$$
  \mbox{pvr}_k(K) = \sup_L \inf_{P, Q} \vr(PK, QL).
$$
where the supremum is taken over all convex bodies $L\subset \R^n$ and the infimum is taken
over all projections $P, Q$ of rank $k$.
Given two bodies $K$ and $L$, note that the quantity $\inf_{P, Q} \vr(PK, QL)$ measures how close $k$-dimensional projections of the
bodies can be (in terms of the volume ratio). So, $\mbox{pvr}_k(K)$ provides the worst estimate of this measure that works for any body $L$.
In this terminology,  Theorem~\ref{main thm}  says that
$$
 \mbox{pvr}_k(K) \gtrsim
    \, \min\left\{\frac{  k}{  \sqrt{n}} \cdot \sqrt{\frac{1}{\log \log \log(\frac{n\log(n)}{k})}}, \,
  \frac{\sqrt{k}}{\sqrt{\log(\frac{n\log(n)}{k})}}\right\}  ,
$$
while Remark~\ref{rem-l-1} states
$$
      \mbox{pvr} _k (B_1^n) \lesssim
 \begin{cases}
    \frac{k}{\sqrt{n}}  & \text{if } k\geq n^{\nicefrac{2}{3}}  (\log n)^{\nicefrac{2}{3}}\\
    \sqrt{\frac{n}{k}} \log n &\text{if } \sqrt{n} \log n< k\leq n^{\nicefrac{2}{3}}  (\log n)^{\nicefrac{2}{3}}\\
    \sqrt{k} &\text{if }  k \leq  \sqrt{n} \log n.
\end{cases}
$$
Note also that (\ref{GH-vr-orig}) implies that for every convex body $K\subset \R^n$,
$$
  \mbox{pvr}_k (K)\lesssim \sqrt{k}\, \log k,
$$
while (\ref{G-C-P}) implies that
$$
  \mbox{pvr}_k (B_2^n) \geq \inf_{\mbox{rk }Q=k} \vr(B_2^k, Q B_1^n) \gtrsim \sqrt{\frac{k}{\log(2n/k)  }},
$$
which in particular shows that up to logarithmic factors $B_2^n$ maximizes $\mbox{pvr} _k (\cdot)$ and that, in general,
$\mbox{pvr} _k (K)$ could be significantly larger than $k/\sqrt{n}$ even for $k\geq n^{2/3}$.

\smallskip

Finally let us note that it would be natural to consider the following counterpart of the previous parameter. For $1\leq k\leq n$ and a convex body  $K\subset \R^n$ be we define
projection $k$-outer volume ratio as
$$
  \mbox{povr}_k(K) = \sup_L \inf_{P, Q} \vr(PL, QK),
$$
where as before  the supremum is taken over all  convex bodies $L\subset \R^n$ and the infimum is taken
over all projections $P, Q$ of rank $k$.

Note that by
Dvoretzky theorem for $1\leq k \leq c \log n$,
$$
      \mbox{povr}_k(B_2^n) \leq 2.
$$
In the next theorem we show that this quantity is also bounded when $k$ is proportional to $n$.

\begin{theorem}
Let $0 < \lambda \leq 1$  There exists a  constant $C(\lambda)>0$ depending only on $\lambda$ such that
   if $k=\lambda n$ then
   $$
      \mbox{\rm povr}_k(B_2^n) \leq C(\lambda).
   $$
\end{theorem}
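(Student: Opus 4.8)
The plan is to observe that the statement is equivalent to a (known) fact about good projections, and then indicate how that fact is proved. For any rank‑$k$ projection $Q$, the set $QB_2^n$ is a $k$‑dimensional ellipsoid, and $\vr(W,\mathcal E)=\vr(W,B_2^k)$ for every $k$‑dimensional ellipsoid $\mathcal E$ (write $\mathcal E=S_0B_2^k$ and absorb $S_0$ into the affine maps in the definition of $\vr$). Combining this with the reduction to orthogonal projections from the beginning of the proof of Theorem~\ref{main thm} — which gives $\vr(PL,W)=\vr(P_EL,W)$ when $P_E$ is the orthogonal projection with the same kernel as $P$ — we get $\inf_{P,Q}\vr(PL,QB_2^n)=\inf_{E}\vr(P_EL,B_2^k)$, the infimum over $k$‑dimensional subspaces $E$. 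Hence it suffices to show: for every convex body $L\subset\R^n$ and every $\lambda\in(0,1)$ there is a subspace $E$ with $\dim E=k=\lambda n$ and $\vr(P_EL,B_2^k)\le C(\lambda)$, i.e. $L$ has a proportional‑dimensional projection of bounded (ordinary) volume ratio. By affine invariance of $\vr$ and, if needed, Rogers--Shephard (Remark~\ref{rem-rs}), one may reduce to centrally symmetric $L$; the non‑symmetric case is handled by running the argument below directly on $L$, using that the $M$‑position exists for arbitrary convex bodies.

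I would place $L$ in Pisier's \emph{regular $M$‑position}: after a volume‑preserving linear map, $|L|=|B_2^n|$ and for all $t\ge1$ the four covering numbers $N(L,tB_2^n)$, $N(B_2^n,tL)$, $N(L^\circ,tB_2^n)$, $N(B_2^n,tL^\circ)$ are all at most $\exp(c_0 n\,\eta(t))$ with $c_0$ absolute and $\eta(t)\to0$ as $t\to\infty$. Let $E$ be a uniformly random $k$‑dimensional subspace. First, projecting a covering of $L$ by $N(L,C_1B_2^n)$ translates of $C_1B_2^n$ shows $|P_EL|^{1/k}\le N(L,C_1B_2^n)^{1/k}\,C_1\,|B_2^k|^{1/k}$, which for any fixed absolute $C_1$ is $\le C_0(\lambda)\,|B_2^k|^{1/k}$, uniformly in $E$. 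Second, since the in‑radius of $P_EL$ equals $1/\mathrm{rad}(E\cap L^\circ)$, it is enough to show $\mathrm{rad}(E\cap L^\circ)\le C_1(\lambda)$ for a random $E$: cover $L^\circ$ by $\exp(c_0 n\,\eta(t))$ balls $x_i+tB_2^n$ with $t=t(\lambda)$ large; a point $x\in E\cap L^\circ$ with $\|x\|_2>C_1(\lambda)$ forces some $x_i$ to satisfy $\mathrm{dist}(x_i,E)\le t$ while $\|P_Ex_i\|_2$ is prescribed‑large, and — using $\|P_Ex_i\|_2^2+\mathrm{dist}(x_i,E)^2=\|x_i\|_2^2$ together with the sharp concentration of the angle between a fixed vector and a random subspace — each such event has probability at most $e^{-c(\lambda)n}$. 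A union bound over the $\exp(c_0 n\,\eta(t))$ centers then gives probability $<1/2$ once $t$ is chosen so that $c_0\eta(t)<c(\lambda)$, with $C_1(\lambda)\sim t(\lambda)/\sqrt{1-\lambda}$. Thus a random $E$ satisfies $P_EL\supseteq C_1(\lambda)^{-1}B_2^k$.

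Combining the two estimates, for a random — hence for some — subspace $E$ we obtain $\vr(P_EL,B_2^k)\le C_0(\lambda)\,C_1(\lambda)=:C(\lambda)$, which proves the theorem. I expect the second estimate to be the main obstacle: controlling the radius of a random proportional section of an $M$‑position body by a quantity independent of $n$. The naive low‑$M^*$ estimate does not suffice, because in the $M$‑position $M^*(L^\circ)$ can be as large as $\sqrt{\log n}$ (already for $L=B_1^n$), which would only yield $\vr(P_EL)\lesssim C(\lambda)\sqrt{\log n}$; one genuinely has to exploit the \emph{decay in $t$} of the covering numbers, i.e. the regularity of the $M$‑position. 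Finally, $C(\lambda)$ necessarily blows up as $\lambda\to1$: $\mathrm{povr}_n(B_2^n)=\sup_L\vr(L)=\infty$ already for $L=B_\infty^n$, so the content lies in the range of $\lambda$ bounded away from $1$, which is also where the balance of parameters in the second estimate is comfortable.
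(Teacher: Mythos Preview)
Your approach is essentially the same as the paper's: both reduce the statement to the assertion that every convex body has a $\lambda n$-dimensional projection with bounded (ordinary) volume ratio, and both obtain this by placing $L$ in $M$-position. The paper simply cites this last fact as a black box (Theorem~8.6.1 in \cite{artstein2015asymptotic}), whereas you sketch its proof via Pisier's \emph{regular} $M$-position and a random-subspace argument; your observation that one genuinely needs the decay $\eta(t)\to 0$ in the covering numbers (and not merely an $M^*$-estimate) is exactly the content behind the cited theorem, so your unpacking is correct and on point. One cosmetic remark: your closing sentence ``$\mathrm{povr}_n(B_2^n)=\sup_L\vr(L)=\infty$'' is slightly misstated --- for each fixed $n$ this supremum is finite (of order $\sqrt n$); what you mean is that it is unbounded in $n$, so no uniform constant $C(1)$ can exist.
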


\begin{proof}

Let $L\in \R^n$ be a convex body. Applying an affine transformation if needed we can assume
that $L$ is in $M$-position, which means that  $|L|=|B_2^n|$, that $L$ can be covered by $e^{Cn}$
translates of $B_2^n$ and that $B_2^n$ can be covered by $e^{Cn}$ translates of $L$. We refer to  \cite[Chapter~8]{artstein2015asymptotic}  and to  \cite[Chapter~7]{Pisier}
for several equivalent definitions of $M$-position, its existence, and for basic properties
of convex bodies in $M$-positions. Note that the existence of $M$-position in the non-symmetric case was
first established in \cite{MP-m-ell, Rud-m-ell}. By  \cite[Theorem~8.5.4]{artstein2015asymptotic} such a
position exists for every convex body.

Now, Theorem~8.6.1 in  \cite{artstein2015asymptotic} implies that there exists a
projection  $P$ of rank $k=\lambda n$ such that the body $PL$ have the volume ratio bounded by a constant
depending only on $\lambda$ (in fact, it is true for  ``most'' projections).
This implies the desired result.
\end{proof}

\bigskip

\noindent
{\bf Acknowledgement. } Part of this research was done when A.L. was participating in the 
ICERM program {\it ``Harmonic Analysis and Convexity".} A.L. is grateful to ICERM for 
hospitality and excellent working conditions. 

The first-named and third-named authors were partially supported by CONICET-PIP 11220200102366CO
 and ANPCyT PICT 2018-4250.

\end{document}